\documentclass[11pt]{amsart}

\usepackage{amssymb,amsmath,amsthm}
\usepackage[all]{xy}
\usepackage{url}

\newtheorem{thm}{Theorem}[section]
\newtheorem{prop}[thm]{Proposition}
\newtheorem{lem}[thm]{Lemma}

\theoremstyle{definition}
\newtheorem{example}[thm]{Example}
\newtheorem{remark}[thm]{Remark}

\numberwithin{equation}{section}

\newcommand{\bbZ}{{\mathbb{Z}}}
\newcommand{\bbP}{{\mathbb{P}}}
\newcommand{\bbG}{{\mathbb{G}}}
\newcommand{\bbC}{{\mathbb{C}}}
\newcommand{\bbQ}{{\mathbb{Q}}}

\newcommand{\bfF}{{\mathbf{F}}}

\newcommand{\Cr}{\operatorname{Cr}}
\newcommand{\tr}{\operatorname{Tr}}
\newcommand{\GL}{\operatorname{GL}}
\newcommand{\SL}{\operatorname{SL}}
\newcommand{\PGL}{\operatorname{PGL}}
\newcommand{\Aut}{\operatorname{Aut}}

\newcommand{\id}{\operatorname{id}}
\newcommand{\Pic}{\operatorname{Pic}}

\newcommand{\bsm}{\left(\begin{smallmatrix}}
\newcommand{\esm}{\end{smallmatrix}\right)}

\newcommand{\la}{\langle}
\newcommand{\ra}{\rangle}

\newcommand{\calR}{\mathcal{R}}

\newcommand{\frakS}{\mathfrak{S}}
\newcommand{\frakA}{\mathfrak{A}}

\newcommand{\beq}{\begin{equation}}
\newcommand{\eeq}{\end{equation}}

\newcommand{\ct}[1]{\!\! \fbox{\rm #1}\! }

\begin{document}

\title{Fixed points of a finite subgroup of the plane Cremona group}

\author{Igor Dolgachev}
\author{Alexander Duncan}
\thanks{The second author was partially supported by
National Science Foundation
RTG grant DMS 0943832.}

\begin{abstract}
We classify all finite subgroups of the plane Cremona group which have a
fixed point.  In other words, we determine all rational surfaces $X$ with
an action of a finite group $G$ such that $X$ is equivariantly
birational to a surface which has a $G$-fixed point.
\end{abstract}

\maketitle

\section{Introduction}

Let $G$ be a finite subgroup of the plane Cremona group, $\Cr(2)$,
the group of birational transformations of the complex projective plane.
We say that $G$ \emph{has a fixed point} if there exists a
smooth rational projective surface $X$ with a faithful $G$-action
$\rho : G \hookrightarrow \Aut(X)$, and a birational map
$\phi : X \dasharrow \bbP^2$ such that
$\phi\circ \rho(G)\circ\phi^{-1} = G$
and $X$ has a $G$-fixed point.
This definition depends only on the conjugacy class of $G$ in $\Cr(2)$.
In this paper we present a classification of conjugacy classes of
subgroups of $\Cr(2)$ with fixed point and, for each class,
we find a representative $G$-surface.

For abelian finite groups acting on smooth proper varieties, the
presence of a fixed point is a birational invariant
(see Proposition~A.2~of~\cite{RY}).
In general, however, this is not true; for example, the exceptional
divisor of a blow up of a fixed point may not have any fixed points.
However, if $f:X\to X'$ is a morphism of $G$-surfaces,
then a fixed point on $X$ maps to a fixed point on $X'$.
Thus, the theory of minimal models of $G$-surfaces tells us that it
suffices to find minimal $G$-surfaces of one of the following two types:

\begin{itemize}
\item \emph{Conic bundles}: there exists a regular map $f:X\to \bbP^1$ such
that the general fiber is isomorphic to $\bbP^1$ and the subgroup
$\Pic(X)^G$ of $G$-invariant invertible sheaves on $X$ is generated over
$\bbQ$ by the canonical class $K_X$ and the class of a fiber of $f$.
\item \emph{del Pezzo $G$-surfaces}: the anticanonical class $-K_X$ is ample
and $\Pic(X)^G$ is generated over $\bbQ$ by $K_X$.
\end{itemize}

An important tool for solving our problem is the classification of
conjugacy classes of finite subgroups of $\Cr(2)$ from \cite{DI}.
Although we use some results from \cite{DI}, many of our proofs do
not directly rely on this work.
In fact, our work led to a discovery of some gaps in the classification and
we use this opportunity to fill these gaps in that paper. 
Note however, that this classification is incomplete in the case of
conic bundles (see also \cite{Tsygankov}).
One may also find an independent classification of abelian subgroups of
$\Cr(2)$ in \cite{Blanc}.

By considering the action on the tangent space of a fixed point,
we see that any finite group $G$ acting on a smooth surface with a fixed
point must be isomorphic to a subgroup of $\GL(2)$.
Also, it is well-known that a cyclic group always has a fixed point on a
rational variety (for example, as was noticed by J.-P. Serre, this
follows from the Lefschetz fixed-point formula applied to the
structure sheaf).
Consequently, we restrict our attention to $G$ not cyclic.

Recall that a del Pezzo surface has degree $d = K_X^2$.
A del Pezzo surface $X$ of degree $4$ can be written by two equations in
$\bbP^4$ defined by diagonal quadrics.
The coordinate hyperplanes cut out 5 genus 1 curves
$E_1$, \ldots, $E_5$ on $X$.
A del Pezzo surface $X$ of degree $3$ is a cubic surface in $\bbP^3$.
An \emph{Eckardt point} on $X$ is a point where three
lines on the surface meet.
A del Pezzo surface $X$ of degree $2$ is a double cover of $\bbP^2$ branched 
over  a smooth plane quartic curve $B$.

\begin{thm} \label{thm:main}
Suppose $G$ is a finite non-cyclic subgroup of the Cremona group
admitting a fixed point.  Then there exists a $G$-surface $X$
realizing a fixed point $p$ of $G$ of one of the following forms:
\begin{enumerate}
\item[\ct{L}] $X$ is $\bbP^2$,
\item[\ct{6}] $X$ is the del Pezzo surface of degree $6$,
\item[\ct{4}] $X$ is a del Pezzo surface of degree $4$ and $p$ lies on
exactly two curves $E_i$, $E_j$, both of which are equianharmonic,
\item[\ct{3}] $X$ is a cubic surface and the tangent space to $p$
contains three Eckardt points,
\item[\ct{2A}] $X$ is a del Pezzo surface of degree $2$ and $p$ lies on
the ramification divisor $R$,
\item[\ct{2B}] $X$ is a del Pezzo surface of degree $2$ and $p$ is the
intersection point of four exceptional curves,
\item[\ct{1}] $X$ is a del Pezzo surface of degree $1$ and $p$ is the base
point of the anti-canonical linear system,
\item[\ct{C}] $X$ is a minimal conic bundle.
\end{enumerate}
\end{thm}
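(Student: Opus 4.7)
The plan is to begin from the reduction pointed out in the introduction: since a $G$-equivariant morphism sends fixed points to fixed points, it suffices to produce a fixed point on a \emph{minimal} $G$-surface, which by the theory of minimal models for $G$-surfaces is either a minimal conic bundle or a del Pezzo $G$-surface. The conic bundle alternative is precisely \ct{C}, so the bulk of the argument is to treat each degree $d = K_X^2$ of a minimal del Pezzo $G$-surface on which $G$ has a fixed point and, when necessary, to perform equivariant birational modifications to place $(X,p)$ into one of the remaining listed forms.

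For the large-degree cases I would argue that every fixed-point situation reduces to one of the listed forms. For $d=9$ the surface is $\bbP^2$, giving \ct{L}. For $d=8$, a minimal $G$-surface is a form of $\bbP^1 \times \bbP^1$, and a fixed point together with the two rulings through it provides enough data to construct an equivariant birational map to $\bbP^2$. Degree $d=7$ admits no minimal $G$-surface (the unique $(-1)$-curve is $G$-invariant). Degree $d=6$ is \ct{6}. For $d=5$, where $\Aut(X) = \frakS_5$, I would enumerate non-cyclic subgroups with a fixed point and show each yields a fixed point either on $\bbP^2$ or, after equivariantly contracting a suitable $G$-orbit of $(-1)$-curves, on the degree-$6$ del Pezzo surface.

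The low-degree cases $d=4,3,2,1$ are where the sharp geometric conditions in the theorem arise. For $d=4$, I would exploit the pencil of quadrics defining $X$, whose five singular members contain the curves $E_i$; the tangent representation $G \hookrightarrow \GL(T_pX)$ of a non-cyclic $G$ must respect the configuration of tangent directions, and I would show this forces $p$ to lie on exactly two $E_i$ and the $j$-invariant of these elliptic curves to be equianharmonic. For $d=3$, analyzing the intersection of $X$ with the tangent plane $T_pX$ and the action of non-cyclic $G$ on its three line components forces three Eckardt points to lie in $T_pX$. For $d=2$, a fixed point either lies on the ramification divisor of the anticanonical double cover (\ct{2A}) or is exchanged by the Geiser involution with another point over a single point of $\bbP^2$, and non-cyclicity of $G$ then forces $p$ to be the intersection of four exceptional curves (\ct{2B}). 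For $d=1$, the unique base point of $|-K_X|$ is automatically $\Aut(X)$-invariant, which directly yields \ct{1}.

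The hardest step will be the $d=4,3,2$ analysis, where the local linear $G$-action at $p$ must be matched precisely with global data such as $j$-invariants, Eckardt configurations, and intersections of exceptional curves. Ruling out fixed points that fail these sharp conditions is where I would draw most heavily on the classification of finite subgroups of $\Cr(2)$ in \cite{DI}, while remaining alert to the gaps in that classification noted in the introduction. A secondary obstacle is that every equivariant contraction or blow-up I perform must be checked to preserve both the fixed point in question and the minimality properties used to diagnose the case.
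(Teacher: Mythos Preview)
Your overall architecture matches the paper's: reduce to a minimal $G$-surface and then treat conic bundles and del Pezzo surfaces degree by degree. The treatments of degrees $9$, $8$, $7$, $6$, $1$ and the conic bundle case are essentially as in the paper.

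There is, however, a real gap in your degree-$4$ plan. It is \emph{not} true that every fixed point of a non-cyclic minimal $G$ on a quartic del Pezzo satisfies the sharp condition in \ct{4}; many such pairs $(X,G,p)$ fail it. The paper handles this by proving a dichotomy lemma: if $G$ is abelian, or if $G$ has more than one fixed point on $X$, then blowing up $p$ produces a $G$-minimal conic bundle that \emph{still carries a fixed point}, and one lands in \ct{C}. Only when $G$ is non-abelian with a unique fixed point is one forced to remain on the quartic del Pezzo; in that residual case the induced action of a subgroup of index $\le 2$ on the elliptic curve $E_i$ through $p$ has a \emph{unique} fixed point, hence its image in $\Aut(E_i,p)$ has order $6$, which forces $E_i$ to be equianharmonic and $3 \mid |G|$, and from there the explicit surface and the two groups $2^2:\frakS_3$, $3:4$ are pinned down. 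Your proposal to read off the equianharmonic condition directly from the tangent representation at $p$ cannot work without first splitting off the conic-bundle cases, and this splitting is the substantive content of the degree-$4$ analysis.

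Two smaller imprecisions. In degree $3$ you assume $T_pX \cap X$ consists of three lines, but in the paper this intersection is generically a \emph{nodal cubic} (case \ct{3.1}); the three Eckardt points in $T_pX$ lie instead on an auxiliary line $\ell_1$ (the second pointwise-fixed line of the order-$3$ element of class $(3D)$). The paper's route is to use minimality to force $3 \mid |G|$ and then classify the order-$3$ elements by their eigenvalue patterns $(3A)$, $(3C)$, $(3D)$ on $\bbP^3$, rather than to work purely inside $T_pX$. In degree $5$ the only non-cyclic minimal group with a fixed point turns out to be $D_{10}$, and the paper sends it directly to $\bbP^2$ via a linear system of cubics with a double base point; no reduction to the degree-$6$ surface is needed or used.
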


Note that there may be some overlap between these cases as there may be
more than one $G$-surface in an equivalence class.
Occurences of this phenomenon, along with the specific groups that occur
in each case, will be discussed in the sections below.
For the readers convenience, we consolidate those groups
acting on del Pezzo surfaces of degree 2--6 in Table~\ref{tab:DP2to4}.
We use the notations for finite groups employed in \cite{DI} borrowed
from \cite{Atlas}.

\begin{table}[ht]
\begin{center}
\begin{tabular}{|c|c|c|}
\hline
Group & Order & Cases \\
\hline
$2^2$ & $4$ & \ct{2A.1} \\

$\frakS_3$ & $6$ & \ct{6} \\
$\frakS_3$ & $6$ & \ct{3.1} \\

$4 \times 2$ & $8$ & \ct{2B.2}, \ct{2A.3} \\
$D_8$ & $8$ & \ct{2B.1} \\
$Q_8$ & $8$ & \ct{2B.3} \\

$3^2$ & $9$ & \ct{3.3} \\

$6 \times 2$ & $12$ & \ct{2A.2} \\
$\frakS_3 \times 2$ & $12$ & \ct{6} \\
$\frakS_3 \times 2$ & $12$ & \ct{3.2} \\
$3 : 4$ & $12$ & \ct{4} \\

$4^2$ & $16$ & \ct{2B.5} \\
$8 \times 2$ & $16$ & \ct{2A.5} \\
$4.2^2$ & $16$ & \ct{2B.3} \\

$6 \times 3$ & $18$ & \ct{3.3} \\
$\frakS_3 \times 3$ & $18$ & \ct{3.3}(twice)  \\

$12 \times 2$ & $24$ & \ct{2A.4} \\
$2^2 : \frakS_3$ & $24$ & \ct{4} \\
$2 \cdot \frakA_4$ & $24$ & \ct{2B.4} \\

$4 \cdot D_8$ & $32$ & \ct{2B.5} \\

$\frakS_3\times 6$ & $36$ & \ct{3.3} \\

$4 \cdot \frakA_4$ & $48$ & \ct{2B.4} \\
\hline
\end{tabular}
\end{center}
\caption{Non-cyclic subgroups $G$ of $\Cr(2)$ with a fixed point
realized by a minimal del Pezzo $G$-surface of degree 2--6,
but not a minimal conic bundle.}
\label{tab:DP2to4}
\end{table}

We use the opportunity to fill some gaps in the  classification of conjugacy classes in the plane Cremona group from \cite{DI} and we are grateful to Yuri Prokhorov who was the first to observe some of these gaps in his paper \cite{Prok}.

We thank Vladimir Popov who asked the first author about the classification
of finite groups of automorphisms of rational surfaces admitting a fixed
point. 

\section{Preliminaries}

Many of our notations are the same as those in \cite{DI}.
Let $G$ be a finite group.  A \emph{$G$-surface} is a pair $(X,\rho)$
where $X$ is a smooth projective surface and
$\rho : G \hookrightarrow \Aut(X)$ is a faithful $G$-action.
We will often refer to the pair $(X,G)$ or simply $X$ when the context
is clear.
A \emph{morphism} of $G$-surfaces $(X,\rho) \to (X',\rho')$ is a
morphism of the underlying surfaces $f : X \to X'$ such that
$\rho'(G) \circ f = f \circ \rho(G)$.  Similarly, one defines rational
maps, birational maps and birational morphisms of $G$-surfaces.

A $G$-surface $X$ is \emph{minimal} if any birational morphism
$X \to X'$ of $G$-surfaces is an isomorphism.
We say that an action of $G$ on a surface $X$ is a \emph{minimal group
of automorphisms} if the corresponding $G$-surface is minimal.
As in the introduction, minimal $G$-surface are either minimal conic
bundles or minimal del Pezzo $G$-surfaces.

A minimal conic bundle $f:X\to \bbP^1$ is either a minimal ruled
surface with $f$ being one of its rulings, or it has $k \ge 3$
degenerate fibers isomorphic to the union of two $\bbP^1$'s intersecting
transversally at one point.
Recall that a del Pezzo surface $X$ is a smooth projective surface such
that the anticanonical divisor $-K_X$ is ample.  The \emph{degree} of a
del Pezzo surface is $d = K_X^2$, which takes values $1 \le d \le 9$.

We caution the reader that a minimal $G$-surface may be a del Pezzo
surface but not be a minimal del Pezzo $G$-surface!
We shall see an example of such a surface in Section~\ref{sec:DP4}.

With the notable exceptions of $\bbP^2$ and $\bbP^1 \times \bbP^1$, every del Pezzo
surface is a blow up of $\bbP^2$ at $9-d$ points $x_1, \ldots, x_{9-d}$
in general position,
with corresponding exceptional divisors $R_1, \ldots, R_{9-d}$.
Conversely, any set of $9-d$ disjoint $(-1)$-curves can be blown down to
$\bbP^2$; giving rise to a \emph{plane model} of $X$.
Each such choice is called a \emph{geometric marking} and gives rise to
a choice of basis for the orthogonal complement $\calR_X$ of $K_X$
in $\Pic(X)$.

For $d \le 6$, the action of $\Aut(X)$ on $\calR_X$
defines a homomorphism
\beq\label{rho}
\rho:\Aut(X) \to W_{9-d},
\eeq
where $W_n$ denotes the Weyl group of a simple root system of type $E_n$
(by definition, $E_5 = D_5$, $E_4 = A_4$, $E_3 = A_2+A_1$). If $d \le 5$,
then $\rho$ is injective. It follows that in this case any subgroup $G$
of $\Aut(X)$ defines a conjugacy class of $W_n$ which is independent of
a choice of a basis in $\Pic(X)$.

A $G$-surface $X$ is a minimal del Pezzo $G$-surface
if $\Pic(X)^G$ is generated over $\bbQ$ by $K_X$.
A minimal del Pezzo $G$-surface of degree $8$ is isomorphic to
$\bbP^1\times \bbP^1$; the other surface of degree $8$ is never minimal.
Similarly, the surface of degree $7$ is never minimal.

In order to determine whether a del Pezzo surface is minimal, we will use the
following consequence of the Lefschetz fixed-point formula
(as was used in Section~6 of \cite{DI}):

\begin{prop} \label{prop:trace}
Let $X$ be a del Pezzo surface.
If $\sigma$ is an automorphism of $X$, then the trace of
$\sigma^*$ on $\calR_X$ is given by
\[ \tr(\sigma^*|\calR_X) = s - 3 + \sum_{i = 1}^n (2-2g_i) \]
where $s$ is the number of isolated fixed points and
$g_1, \ldots, g_n$ are the genera of the fixed curves.
Moreover, for a finite group $G$, the surface $X$ is $G$-minimal
if and only if
\[
\sum_{\sigma \in G} \tr(\sigma^*|\calR_X) = 0 \ .
\]
\end{prop}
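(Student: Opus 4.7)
The plan is to apply the holomorphic/topological Lefschetz fixed-point formula to $\sigma$ acting on $X$, and then use a standard character-theoretic averaging argument for the minimality criterion. First I would recall that a del Pezzo surface $X$ is a smooth rational projective surface, so its Betti numbers satisfy $b_0 = b_4 = 1$ and $b_1 = b_3 = 0$, while $H^2(X,\bbQ) = \Pic(X)\otimes\bbQ$. The automorphism $\sigma$ acts trivially on $H^0$ and $H^4$, and it acts on $\Pic(X)\otimes\bbQ$ preserving $K_X$ and the intersection form. Since $\calR_X$ is defined as the orthogonal complement of $K_X$ in $\Pic(X)\otimes\bbQ$ (with respect to the intersection form), it is $\sigma$-stable, giving the orthogonal decomposition $\Pic(X)\otimes\bbQ = \bbQ K_X \oplus \calR_X$ of $\sigma$-modules.

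Next I would compute both sides of the Lefschetz trace formula
\[ L(\sigma) = \sum_i (-1)^i \tr\bigl(\sigma^*\bigm| H^i(X,\bbQ)\bigr). \]
With the above description of the cohomology, the right-hand side collapses to $1 + \bigl(1 + \tr(\sigma^*|\calR_X)\bigr) + 1 = 3 + \tr(\sigma^*|\calR_X)$. On the other side, since $\sigma$ has finite order, the fixed locus $X^\sigma$ is smooth; locally at a fixed point one can linearize, so each connected component is either an isolated point or a smooth curve. Hence $L(\sigma)$ equals the topological Euler characteristic $\chi(X^\sigma) = s + \sum_{i=1}^n (2-2g_i)$. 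Equating the two expressions and solving for the trace yields the stated formula.

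For the minimality assertion, I would use the fact that a del Pezzo $G$-surface is $G$-minimal precisely when $\Pic(X)^G$ is generated over $\bbQ$ by $K_X$, equivalently when the $G$-invariant subspace $\calR_X^G$ is zero. The dimension of this invariant subspace is given by the standard averaging formula
\[ \dim \calR_X^G = \frac{1}{|G|} \sum_{\sigma \in G} \tr(\sigma^*|\calR_X), \]
and since the dimension is a non-negative integer, it vanishes if and only if the sum on the right vanishes. The only real subtlety is justifying that the fixed locus decomposes into smooth points and smooth curves so that $\chi(X^\sigma)$ takes the stated form; this follows from the finite-order linearization at fixed points, so there is no serious obstacle.
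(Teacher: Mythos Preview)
Your proof is correct and follows exactly the approach the paper indicates: the paper does not give a detailed proof but simply states the proposition as a consequence of the Lefschetz fixed-point formula (citing Section~6 of \cite{DI}), and your argument is precisely the standard derivation from that formula together with the character-averaging computation of $\dim \calR_X^G$.
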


We are classifying $G$-surfaces up to birational equivalence.  It may
happen that two minimal $G$-surfaces are birationally equivalent.
Indeed, we will see in
Sections~\ref{sec:DP9},~\ref{sec:DP8},~\ref{sec:DP6},~and~\ref{sec:DP5}
that all del Pezzo $G$-surfaces of degree $\ge 5$ with a fixed point are
birationally $G$-isomorphic to $\bbP^2$ with a fixed point.
On the other hand, from Section~8~of~\cite{DI} we have that any minimal
del Pezzo $G$-surface of degree $\le 3$ is \emph{rigid}; thus we have
the following.

\begin{lem} \label{lem:Sarkisov}
Every minimal del Pezzo $G$-surface $X$ of degree $\le 3$
is the unique minimal $G$-surface in its
birational $G$-equivalence class.
\end{lem}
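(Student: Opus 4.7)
The plan is to invoke the $G$-equivariant Sarkisov program for rational surfaces: any birational $G$-map $\phi : X \dashrightarrow X'$ between two minimal $G$-surfaces factors into a sequence of elementary Sarkisov links, each joining two minimal $G$-surfaces (minimal conic bundles or minimal del Pezzo $G$-surfaces). It therefore suffices to show that every elementary link originating at a minimal del Pezzo $G$-surface $X$ of degree $d \le 3$ is a $G$-isomorphism; the composition then telescopes to a $G$-isomorphism $X \cong X'$.

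I would then run a case analysis on the four link types. Links of type~I and~III are blow-ups or blow-downs relating a del Pezzo surface to a conic bundle; ruling them out amounts to showing that $X$ admits no $G$-orbit of points whose blow-up equips the resulting surface with an equivariant $\bbP^1$-fibration. Links of type~II between two minimal del Pezzo $G$-surfaces of degree $\le 3$ are, up to classification, the classical Bertini involution (for $d = 1$), the Geiser involution (for $d = 2$), and the short list of degree-$3$ links; each, when it exists equivariantly on a $G$-minimal $X$, is itself a biregular $G$-automorphism of $X$ and so does not produce a new target. Links of type~IV concern changes of conic bundle structure and do not originate on a del Pezzo surface.

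The main obstacle is the rigidity statement ruling out the type~I and~III links: no $G$-orbit of points on $X$ can lie in the special position required to build a new Mori fiber space. This is precisely the content of Section~8 of \cite{DI}, where the analysis is carried out via the $G$-action on $\calR_X$ through the representation \eqref{rho}: one checks that the required configurations of $G$-invariant disjoint $(-1)$-curves or special points (e.g.\ $G$-stable Eckardt-type sets in the cubic case) are incompatible with $G$-minimality once $d \le 3$. With this rigidity in place, every link in the Sarkisov decomposition of $\phi$ is biregular, and composing yields the desired $G$-isomorphism $X \cong X'$.
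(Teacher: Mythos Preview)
Your approach is exactly the paper's: both invoke the birational rigidity of minimal del Pezzo $G$-surfaces of degree $\le 3$ proved in Section~8 of \cite{DI} via the Sarkisov link classification; the paper merely cites this result, while you sketch the link-by-link case analysis. One small correction: for $d=3$ the type~II self-links (blow up a $G$-orbit of size $1$ or $2$, then blow down its Geiser- or Bertini-conjugate on the intermediate degree~$2$ or degree~$1$ surface) are genuine birational self-maps of $X$, \emph{not} biregular $G$-automorphisms as you state; what rigidity actually asserts, and all the lemma needs, is that the target is $G$-isomorphic to $X$.
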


The remaining case of degree $4$ is more subtle and will be discussed in
Section~\ref{sec:DP4}.

In Theorems~\ref{thm:cubics},~\ref{thm:DP2A}~and~\ref{thm:DP2B}, we will
describe families of del Pezzo surfaces via normal forms involving
parameters.  For a given family A, there is some collection of groups $G$
which fix a point and for which the $G$-surface is minimal.
For certain special values of these parameters, the
set of possible groups $G$ may be larger and we have a new family B.
We say that A \emph{specializes} to B,
say that B is a \emph{specialization} of A, or write A $\to$ B.
Conversely, we say that A is a \emph{generization} of B.

This language is justified in view of the following:

\begin{prop}
Let $X \to T$ be  a flat family of del Pezzo surfaces of degree $\le 5$
over a base scheme $T$.
For each conjugacy class $C$ of subgroups in $W_{9-d}$,
the set
\[
\{t\in T: \Aut(X_t)\
\textrm{contains a subgroup $G$ representing $C$}\}.
\]
is closed in $T$.
\end{prop}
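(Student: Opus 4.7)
The plan is to realize $Z_C$ as the image of a closed set under a proper morphism. The key ingredient is that the relative automorphism scheme $\mathcal{A} := \underline{\Aut}(X/T)$ is \emph{finite} over $T$. It is representable and of finite type via the relative polarization by $-K_{X/T}$, which gives a closed embedding $\mathcal{A} \hookrightarrow \underline{\PGL}(\pi_* \omega_{X/T}^{-m})$ for $m$ sufficiently large. It is quasi-finite since each geometric fiber $\Aut(X_t)$ injects into $W_{9-d}$ by (\ref{rho}). Properness comes from the valuative criterion applied to the anticanonically polarized family: over a DVR, an automorphism of the generic fiber extends biregularly to the total space because each fiber is already a relatively minimal model with $-K$ ample. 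Establishing this finiteness is the main technical obstacle; the remaining steps are essentially formal.

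Next I would trivialize the Picard lattice after a suitable étale base change. Since $h^1(\mathcal{O}_{X_t}) = 0$ on del Pezzo surfaces, the relative Picard scheme $\Pic_{X/T}$ is étale over $T$, so the orthogonal complement $\calR_{X/T}$ of $K_{X/T}$ is a locally constant sheaf of lattices. Passing to an étale surjective cover $T' \to T$ that trivializes $\calR_{X/T}$, one obtains a homomorphism of $T'$-group schemes $\rho : \mathcal{A}_{T'} \to (W_{9-d})_{T'}$ into the constant group scheme, which is injective on each fiber by the injectivity of (\ref{rho}) for $d \le 5$.

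With these in place I would conclude as follows. For each fixed $g \in W_{9-d}$, the subscheme $\rho^{-1}(\{g\} \times T')$ is closed in $\mathcal{A}_{T'}$, and its image $T'_g := \{t' : g \in \rho(\mathcal{A}_{t'})\}$ in $T'$ is closed because $\mathcal{A}_{T'} \to T'$ is finite, hence proper. For any subgroup $H \subseteq W_{9-d}$, the set $T'_H := \bigcap_{g \in H} T'_g$ is then closed. Since the conjugacy class of $\rho(\mathcal{A}_{t'})$ in $W_{9-d}$ does not depend on the choice of marking (the remark immediately following (\ref{rho})), the preimage of $Z_C$ in $T'$ equals the finite union $\bigcup_{H \in C} T'_H$, which is closed. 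Finally, $T' \to T$ is étale and surjective, hence open and surjective, so the complement of $Z_C$ in $T$ is open, proving that $Z_C$ is closed.
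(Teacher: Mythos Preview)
Your argument is correct and genuinely different from the paper's. Both proofs begin by passing to a cover of $T$ that trivializes the local system of Picard lattices, but then diverge. The paper uses the marking to define a classifying morphism from $T$ to the GIT moduli space $P_2^{9-d}$ of $(9-d)$-tuples of points in $\bbP^2$; on this space $W_{9-d}$ acts biregularly by Cremona transformations, and the stabilizer of a point is exactly the image of $\Aut(X_t)$ under~\eqref{rho}. The proposition then reduces to the elementary fact that, for a finite group $\Gamma$ acting on a variety $V$, the locus of points whose stabilizer contains a given subgroup $H$ is closed.

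Your approach avoids the explicit moduli space entirely, working instead with the relative automorphism scheme $\mathcal{A}=\underline{\Aut}(X/T)$, whose finiteness over $T$ you establish via the anticanonical polarization, the injectivity of~\eqref{rho} for $d\le 5$, and the valuative criterion. Once $\rho:\mathcal{A}_{T'}\to (W_{9-d})_{T'}$ is in hand, your closedness argument is essentially dual to the paper's: rather than looking at stabilizers of a $W_{9-d}$-action on moduli, you look at fibers of $\rho$ over elements of $W_{9-d}$ and push forward along the proper map $\mathcal{A}_{T'}\to T'$. Your route is more intrinsic and would transport unchanged to settings without a convenient global moduli description; the paper's route is more concrete and makes the role of the Weyl group action on configurations of points visible, at the cost of invoking the results of \cite{DO}.
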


\begin{proof}
Since the monodromy group of a smooth flat family of del Pezzo surfaces
is a finite subgroup of the Weyl group $W_{9-d}$, after passing to a
certain  finite cover of $T$, we may trivialize the
local coefficient system on $T$ defined by the second cohomology group
of fibers.
Choosing simultaneously a geometric marking
in each fiber, we may define a map from $T$ to the GIT-quotient
$P_2^{9-d}$ of $(\bbP^2)^{9-d}$ by the group $\PGL(3)$.
Since the preimage of a closed set is closed, it suffices to assume that
$T$ is an open subset $U$ of $P_2^{9-d}$ parameterizing point sets whose
blow-up is a del Pezzo surface.
From \cite{DO}, the group $W_{9-d}$ acts biregularly on $U$ via
Cremona transformations and the stabilizer of a point $t \in T$ is equal
to the image of $\Aut(X_t)$ under the homomorphism \eqref{rho}.

Let $a :\Gamma\times V\to V$ be an action of a  finite group $\Gamma$ on
an algebraic variety $V$.  The pre-image $Z$ of the diagonal $\Delta$ of
$V$ under the map $(a ,\textrm{id}):\Gamma\times V\to V\times V$
consists of points $(g,v)$ such that $g\in \Gamma_v$. For any subgroup
$H$ of $\Gamma$, the pre-image of $H$ under the first projection $Z\to
\Gamma$ is a closed subset $W$ of $Z$.
Since $\Gamma$ is finite, the image of $W$ via the second projection
$Z\to V$ is a closed subset of $V$ consisting of points whose stabilizer
contains $H$. Applying this to our situation, where $\Gamma = W_{9-d}$
and $V = U$, we obtain the assertion of the proposition.
\end{proof}

\section{del Pezzo surfaces of degree $9$}
\label{sec:DP9}

In this case, $X \cong \bbP^2$ and we will classify finite
subgroups of $\Aut(\bbP^2)\cong \PGL(3)$ that have a fixed point.

Let $G$ be a subgroup of $\PGL(3)$ and let $\widetilde{G}$ be a preimage
in $\GL(3)$.  We have a three dimensional representation $\rho$ of
$\widetilde{G}$.  The existence of a $G$-fixed point on $X$ is
equivalent to the existence of a $1$-dimensional subrepresentation
$\chi$ of $\rho$.
It follows that any finite group of projective transformation has either
no fixed points, or one fixed point, or three isolated fixed points, or
a line of fixed points plus an isolated fixed point.

\begin{thm}[Case \ct{L}]
\label{thm:L}
Conjugacy classes of finite subgroups of $\Aut(\bbP^2)$
with one isolated  fixed point are in a natural bijection with
conjugacy classes of finite subgroups of $\GL(2)$
with a fixed point.
\end{thm}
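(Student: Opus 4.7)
The plan is to construct the bijection explicitly via the tangent representation at the fixed point, then verify both directions are mutually inverse at the level of conjugacy classes.

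\textbf{Forward map.} Given $G \subset \PGL(3)$ with its unique fixed point $p$, I send $G$ to the image of the tangent representation $d_p : G \to \GL(T_p\bbP^2) \cong \GL(2)$. The first step is injectivity: after moving $p$ to $[1:0:0]$, any lift of $g \in G$ to $\GL(3)$ has the block form $\bsm \lambda & * \\ 0 & A \esm$, and the induced map on $T_p\bbP^2$ is $\lambda^{-1}A$. If this is trivial then $A = \lambda I$; writing $g = \lambda(I+N)$ with $N$ strictly upper triangular and $N^2=0$, the class of $g$ in $\PGL(3)$ has infinite order unless $N=0$, forcing $g=1$ since $G$ is finite.

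\textbf{Backward map.} Given a finite $H \subset \GL(2)$, I embed it in $\PGL(3)$ as $\iota(H)$ via the block-diagonal homomorphism $A \mapsto \bsm 1 & 0 \\ 0 & A \esm$, which is injective because a scalar multiple of $\mathrm{diag}(1,A)$ equal to $\mathrm{diag}(1,B)$ forces the scalar to be $1$. The image fixes $[1:0:0]$, and $d_{[1:0:0]} \circ \iota = \id$ by construction. Both maps descend to conjugacy classes: a $\PGL(3)$-conjugation between two groups with unique fixed points must match the fixed points and intertwine the tangent representations, while a $\GL(2)$-conjugation of $H$ is realized inside $\PGL(3)$ by applying $\iota$ to the conjugating matrix.

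\textbf{Main step: $\iota \circ d_p$ is the identity on conjugacy classes.} After moving $p$ to $[1:0:0]$, the group $G$ lies in the stabilizer $S$ of $[1:0:0]$ in $\PGL(3)$, which fits into a split exact sequence
$$1 \to U \to S \to \GL(2) \to 1,$$
where $U \cong \bbG_a^2$ consists of affine translations acting trivially on $T_p\bbP^2$, and the section $\iota$ identifies $S$ with $U \rtimes \GL(2)$. Setting $H := d_p(G)$, the subgroup $G$ is a second splitting of the pulled-back sequence $1 \to U \to U \rtimes H \to H \to 1$. Splittings of such a sequence are classified up to $U$-conjugacy by $H^1(H,U) = H^1(H,\bbC^2)$, which vanishes because $H$ is finite and $\bbC^2$ is uniquely divisible: concretely, if $s(h) = (u_h, h)$ defines $G$, then $v := |H|^{-1}\sum_{h \in H} u_h \in U$ conjugates $G$ onto $\iota(H)$. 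Thus $G$ is $U$-conjugate, hence $\PGL(3)$-conjugate, to $\iota(H)$. This cohomological vanishing (equivalently, the averaging step) is the only nontrivial input and is the main technical obstacle.
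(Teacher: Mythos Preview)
Your proof is correct and follows the same underlying idea as the paper's: pass from the stabilizer of the fixed point to $\GL(2)$ via the tangent representation (equivalently, the quotient of the stabilizer by its unipotent radical). The paper's argument is much terser: it fixes coordinates with $p_0=(0:0:1)$, asserts that each $g$ is ``uniquely represented'' by $(x:y:z)\mapsto(ax+by:cx+dy:z)$, observes that any $\PGL(3)$-conjugacy between two such groups may be taken to fix $p_0$ (this is where uniqueness of the fixed point enters), and then says ``the converse is also true.'' Your averaging/$H^1$-vanishing step --- showing that any finite $G$ in $U\rtimes\GL(2)$ is $U$-conjugate to the block-diagonal copy $\iota(d_p(G))$ --- is precisely what justifies that converse, which the paper leaves implicit. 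So the route is the same; your version is simply more carefully argued.
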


\begin{proof}
We choose coordinates $x,y,z$
such that the fixed point is $p_0 = (0,0,1)$. A projective
transformation $g$ fixing this point can be uniquely represented by a
transformation $(x:y:z)\mapsto (ax+by:cx+dy:z)$, where
$\tilde{g} =
\left(\begin{smallmatrix}a&b\\ c&d\end{smallmatrix}\right)\in \GL(2)$. Any
conjugate $g' = h^{-1}gh$ must fix the point $p_0$ (here we use the
assumption on the set of fixed points). Hence $\tilde{h}$ conjugates
$\tilde{g'}$ and $\tilde{g}$.  The converse is also true.
\end{proof}

If $G$ has three isolated fixed points, then $G$ is an abelian
group conjugate to a subgroup of transformations
$(x:y:z)\mapsto (ax:by:cz)$.
Finally, if $G$ has a line of fixed points, then $G$ is a
cyclic group.

\section{del Pezzo surfaces of degree $8$}
\label{sec:DP8}

There are two isomorphism classes of del Pezzo surfaces of degree 8.
One is isomorphic to the blow-up of one point, hence it is not minimal.
The other one is isomorphic to  $X = \bbP^1\times \bbP^1$. So we will 
study subgroups of $\bbP^1\times \bbP^1$.

Assume $G$ has a fixed point $p$. Let
$\ell_1,\ell_2$ be the two fibers passing through $p$. Their union is
$G$-invariant.
The group $G$ contains a subgroup $G'$ of index 1 or 2 such that each
ruling $\pi_i:X\to \bbP^1$ is invariant.

Since each ruling $\pi_i$ is $G'$-equivariant, there must be a $G'$-fixed
point on each image $\pi_i(X) \cong \bbP^1$.  Note that any group of
automorphisms which fixes one point on $\bbP^1$ must fix another.  Thus
there exists another pair of lines $\ell'_1, \ell'_2$ whose intersection
is another $G$-fixed point $p'$ on $X$.

Blowing up $p$, the strict transforms of $\ell_1$ and $\ell_2$ both
become exceptional curves.  Since they do not intersect and their union
is $G$-invariant, we may blow them down $G$-equivariantly to $X'$.
The variety $X'$ is isomorphic to $\bbP^2$ and has a $G$-fixed point since
the birational map $X \to X'$ is defined at $p'$.  Thus, we have the
following:

\begin{thm}
If $G$ has a fixed point on $X\cong \bbP^1 \times \bbP^1$
then $X$ is $G$-birationally equivalent to $\bbP^2$.
\end{thm}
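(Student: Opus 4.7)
The plan is to realize, in a $G$-equivariant way, the standard birational map $\bbP^1 \times \bbP^1 \dasharrow \bbP^2$ that blows up a point and then contracts the two fibers through it. Starting the construction at the given $G$-fixed point $p$ is forced, but the resulting rational map is undefined along the two contracted fibers, so to conclude that the target has a $G$-fixed point I will need a \emph{second} $G$-fixed point $p' \in X$ lying off of $\ell_1 \cup \ell_2$.

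To produce $p'$, let $\ell_1, \ell_2$ be the two fibers through $p$ and let $G' \le G$ be the subgroup of index at most two preserving each ruling individually (it exists because $G$ at worst swaps the two fibers through $p$). Each projection $\pi_i \colon X \to \bbP^1$ is $G'$-equivariant, so the image of $G'$ in $\Aut(\bbP^1)$ fixes $\pi_i(p)$; any finite subgroup of $\Aut(\bbP^1)$ with a fixed point is cyclic and hence automatically fixes a second point as well. Taking the product of these new fixed points gives a point $p' \in X$ that is $G'$-fixed and lies off $\ell_1 \cup \ell_2$. If $[G:G'] = 2$, the nontrivial coset swaps the two rulings, and a short compatibility check (the swap matches the extra pair of fixed points on one factor with those on the other) shows that $p'$ is in fact $G$-fixed.

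With both $p$ and $p'$ as $G$-fixed points, I would blow up $p$ to obtain a $G$-surface $\widetilde X$; the strict transforms $\widetilde\ell_1, \widetilde\ell_2$ of $\ell_1, \ell_2$ are disjoint $(-1)$-curves whose union is $G$-invariant, so they admit a $G$-equivariant contraction to a smooth $G$-surface $X'$. A Picard-rank count (or Castelnuovo) identifies $X'$ with $\bbP^2$. The composite $X \dasharrow X'$ is a biregular isomorphism off $\ell_1 \cup \ell_2$, so it is defined at $p'$ and sends $p'$ to a $G$-fixed point of $\bbP^2$, yielding the desired birational $G$-equivalence.

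The main obstacle is the first step, namely the existence of $p'$: existence of a fixed point is not a birational invariant in general, and the argument leans on the special feature that $\bbP^1 \times \bbP^1$ is ruled over two copies of $\bbP^1$, each of which automatically supplies a second fixed point once a finite subgroup of $\Aut(\bbP^1)$ fixes one. The index-two case, where $G$ interchanges the two rulings, is where a little care is needed to see that $p'$ survives as a $G$-fixed point.
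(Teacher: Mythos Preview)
Your proposal is correct and follows essentially the same route as the paper: pass to the index-$\le 2$ subgroup $G'$ preserving each ruling, use the second fixed point of a cyclic action on each $\bbP^1$ factor to manufacture a second $G$-fixed point $p'$ off $\ell_1\cup\ell_2$, then blow up $p$ and contract the two strict transforms to reach $\bbP^2$ with $p'$ surviving as a fixed point. If anything, you are more explicit than the paper in flagging the index-two case (where $G$ swaps the rulings) as needing a compatibility check to see that $p'$ is $G$-fixed and not merely $G'$-fixed; the paper asserts this without comment, and your parenthetical remark that the swap matches the two ``extra'' fixed points is exactly the missing justification.
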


\section{del Pezzo surfaces of degree $6$}
\label{sec:DP6}

The surface $X$ is isomorphic the blow-up of three non-collinear points
$x_1,x_2,x_3$ in the plane.  The strict transforms of the lines
$\ell_{12}, \ell_{13}, \ell_{23}$ through each pair of points
are $(-1)$-curves on $X$.
Along with the exceptional divisors sitting above each point $x_i$,
these form a hexagon of $(-1)$-curves.

Let $p$ be a fixed point of $G$.  If $p$ is on the hexagon, then it
must be one of its vertices since otherwise the side of the hexagon
containing $p$ is $G$-invariant and hence can be equivariantly blown
down.  But, if $p$ is a vertex, then the opposite vertex is also
fixed, and there will be two skew lines that are left invariant and
can be equivariantly blown down. This contradicts the minimality
assumption. Thus $p$ is not on the hexagon.

Since $p$ is not on the hexagon, it must be the preimage of a point $x_0$
in the plane. The blow-up of $X$ at $p$ is a del Pezzo surface of degree 5.
It contains 10 lines, six of them are the preimages of the sides of the
hexagon, three of them are the preimages $\ell_1,\ell_2,\ell_3$  of the
lines in the plane joining $x_0$ with the vertices of the coordinate
triangle.
The last line is the exceptional curve $E(p)$ of the blow-up.
Since the sides of the hexagon and the line $E(p)$ form a $G$-invariant
set of lines, the set of  lines $\ell_1,\ell_2,\ell_3$ is also
$G$-invariant.
The action on this set gives a homomorphism $\rho:G\to \frakS_3$.
If $G$ fixes one line, then we can  blow-down the pair of opposite sides
of the hexagon intersecting this line.
This shows that $(X,G)$ is not minimal.
So, the image of $G$ in $\frakS_3$ is either a cyclic group of order 3,
or the whole $\frakS_3$.
An element in the kernel of $\rho$ fixes all three lines $\ell_i$, and
hence fixes all pairs of opposite sides of the hexagon.
Composing it with the action of the action of the standard Cremona
transformation $s_1$ (see \cite{DI}, p. 487) on $X$ that permutes the
opposite sides, we get the identity.
This shows that $\ker(\rho)$ is either trivial or generated by $s_1$.   

In summary:

\begin{thm} [Case \ct{6}]
Let $G$ be a minimal finite non-cyclic group of automorphisms of a del
Pezzo surface of degree 6 that fixes a point.
Then $G$ is either $\frakS_3$ of order 6 or 
the group $2 \times \frakS_3$ of order 12.
\end{thm}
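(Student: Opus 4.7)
The plan is to finish the case analysis that the preceding discussion has already set up. At that point we have the homomorphism $\rho : G \to \frakS_3$ describing the permutation action on the three lines $\ell_1, \ell_2, \ell_3$, with the image equal to either $\bbZ/3$ or all of $\frakS_3$ (the trivial image having been ruled out by minimality), and with $\ker(\rho)$ equal to either the trivial subgroup or $\la s_1 \ra \cong \bbZ/2$, where $s_1$ is the standard Cremona involution. This produces four possibilities for $G$ to enumerate.

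First I would dispatch the two cases where $\text{im}(\rho) = \bbZ/3$. If $\ker(\rho) = 1$ then $G \cong \bbZ/3$ is cyclic, contradicting the hypothesis. If $\ker(\rho) = \la s_1 \ra$ then $|G| = 6$; the group $\frakS_3$ is ruled out because it has no normal subgroup of order $2$ (so $\la s_1 \ra$ could not be a kernel), hence $G \cong \bbZ/6$ is again cyclic. Thus neither of these cases produces an admissible group. If $\text{im}(\rho) = \frakS_3$ and $\ker(\rho) = 1$, then $\rho$ is an isomorphism and we obtain $G \cong \frakS_3$, the first group in the conclusion.

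The main remaining task, and the main obstacle, is to identify $G$ when $\text{im}(\rho) = \frakS_3$ and $\ker(\rho) = \la s_1 \ra$. Here $|G| = 12$; a priori the groups of order $12$ containing a central subgroup of order $2$ with quotient $\frakS_3$ are $\bbZ/2 \times \frakS_3$ and the dicyclic group $\text{Dic}_3$, and I need to show that the extension
\[
1 \to \la s_1 \ra \to G \to \frakS_3 \to 1
\]
splits. For this I would invoke the geometry: the natural copy of $\frakS_3 \subset \PGL(3)$ given by coordinate permutations of the plane fixes the set $\{x_1,x_2,x_3\}$ and so lifts to a subgroup of $\Aut(X)$ which acts on the three lines $\ell_i$ by the full symmetric group. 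This lifted $\frakS_3$ commutes with $s_1$ (indeed, $s_1$ acts by $(x:y:z)\mapsto(yz:xz:xy)$ and is manifestly invariant under coordinate permutations), giving an internal direct product $\la s_1 \ra \times \frakS_3 \subset \Aut(X)$ of order $12$ mapping isomorphically onto $G$. Hence $G \cong 2 \times \frakS_3$, completing the classification.
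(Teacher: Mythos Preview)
Your case analysis is more explicit than the paper's, which simply writes ``In summary'' after the preceding discussion, and the first three cases are handled correctly. However, the final step---identifying $G$ with $2\times\frakS_3$ when $\ker(\rho)=\la s_1\ra$ and $\mathrm{im}(\rho)=\frakS_3$---has a gap. You exhibit the coordinate-permutation copy of $\frakS_3$ inside $\Aut(X)$ and note that it commutes with $s_1$, producing a subgroup $\la s_1\ra\times\frakS_3\subset\Aut(X)$; but you have not shown that this subgroup coincides with $G$. The coordinate-permutation $\frakS_3$ fixes only the point of $X$ lying over $x_0=(1:1:1)$, whereas the given fixed point $p$ lies over an arbitrary $x_0$ off the coordinate triangle, so there is no a priori reason this particular $\frakS_3$ sits inside $G$ (and in particular no reason it permutes \emph{your} lines $\ell_i$, which are defined using $x_0$). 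The phrase ``mapping isomorphically onto $G$'' is therefore unjustified, and as written nothing yet excludes the dicyclic group of order $12$.

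The cleanest repair uses the fact, already established, that $p$ lies off the hexagon and hence in the open orbit of the torus $T=(\bbC^*)^2\subset\Aut(X)$. Since $T$ acts freely there, $G\cap T=\{1\}$, so the quotient map $\Aut(X)\to\Aut(X)/T\cong D_{12}\cong \frakS_3\times\bbZ/2$ restricts to an injection on $G$; as $|G|=12$ this is an isomorphism, giving $G\cong 2\times\frakS_3$. Equivalently, after conjugating by a torus element you may assume $x_0=(1:1:1)$; then your explicit $\la s_1\ra\times\frakS_3$ does fix $p$, and the same freeness argument shows it is the \emph{full} stabilizer $\Aut(X,p)$, whence $G=\Aut(X,p)$ by comparing orders.
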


If we blow up the fixed point $p$ then the lines $\ell_1,\ell_2,\ell_3$
form a $G$-invariant set of skew lines.
Blowing these down, we obtain a $G$-equivariant birational equivalence
from $X$ to $\bbP^1 \times \bbP^1$.  However, the fixed point is lost.
This equivalence is a link of type II (see Section~7~of~\cite{DI}).
From the discussion in Section~8~of~\cite{DI}, we see that
the only other possible minimal del Pezzo or minimal conic bundles
equivariantly birational to $X$ are del Pezzo surfaces of degree $5$.
But such surfaces are only minimal if $G$ contains an element of
order $5$ (see below).  Thus a del Pezzo surface of order $6$ is the
only model for $G$ which has a fixed point.

\section{del Pezzo surfaces of degree $5$}
\label{sec:DP5}

The surface is isomorphic to the blow up of four points $x_1,\ldots,x_4$
in $\bbP^2$, no three of which are collinear.
In this case we know from Theorem 6.4 of \cite{DI} that $\Aut(X) \cong
\frakS_5$.  The 10 exceptional curves along with their intersections are
in bijective correspondence with vertices and lines of the Peterson
graph.  Alternatively, the 10 exceptional curves are in bijection with
pairs of elements of $\{1,2,3,4,5\}$; two curves intersect if and only
if the pairs have no common elements.

The maximal subgroups of $\frakS_5$ are $\frakS_3 \times 2$, $\frakS_4$,
$\frakA_5$ and $5 : 4$.
Note that $\frakS_3 \times 2 \cong \la (123), (45) \ra$ is not
minimal since it fixes the exceptional curve corresponding to $\{4,5\}$.
The group $\frakS_4$ is not minimal since it leaves invariant the $4$ skew
lines  $\{1,5\}, \{2,5\}, \{3,5\}, \{4,5\}$.
The subgraph of the Petersen graph based on the orbit of any cyclic
group of order 5 is a pentagon.
For example, if $\sigma = (12345)$ and the vertex is $\{1,2\}$,
the orbit consists of vertices 
$\{1,2\},\{2,3\},\{3,4\},\{4,5\},\{1,5\}$.
This shows that any group containing an element of order $5$ must be minimal.
Thus, the groups $\frakS_5$, $\frakA_5$ and $5 : 4$ are minimal; however
they do not have $2$-dimensional representations and thus cannot have
fixed points.
Among their subgroups, the only
non-cyclic group not yet considered is $G \cong D_{10}$.

The group $D_{10}$ is minimal and has two fixed points.
To see this, we use the well-known $\frakS_5$-equivariant
isomorphism between a del
Pezzo surface $X$ of degree 5 and the GIT-quotient $P_1^5$ of
$(\bbP^1)^5$ by $\PGL(2)$.
Represented as point sets, the points
\[
p_0 = (1,\epsilon_5,\epsilon_5^2,\epsilon_5^3,\epsilon_5^4)
\textrm{ and }
p_1 = (1,\epsilon_5^3,\epsilon_5,\epsilon_5^4,\epsilon_5^2)
\]
on $X$ are fixed by the group $G = \la \sigma, \tau \ra \cong D_{10}$
where
\[ \sigma = (12345) \textrm{ and } \tau = (25)(34) \ . \]
Indeed, $\sigma(p_i) \equiv p_i$ since it amounts to multiplication by a
constant; while $\tau$ corresponds to $z \mapsto z^{-1}$ on each $\bbP^1$.

While this $G$-surface is minimal, it is birationally equivalent to
$\bbP^2$. 
Note that neither fixed point lies on an exceptional divisor 
since every $G$-orbit of exceptional divisors contains skew divisors.
Considering $X$ as the blow-up of four points in $\bbP^2$, the linear
system of cubic curves through the four points and a double point at the
image of $p_0$ in the plane is of dimension 2.
Thus we have an equivariant birational map from $X$ to $\bbP^2$
which maps $p_1$ to a fixed point.  We conclude:

\begin{thm}
Suppose $(X,G)$ is a minimal del Pezzo surface of degree $5$
with a fixed point and $G$ non-cyclic.   Then $G \cong D_{10}$
and $X$ is $G$-birational to $\bbP^2$ with a fixed point.
\end{thm}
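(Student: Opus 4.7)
The plan is to exploit the rich structure of $\Aut(X) \cong \frakS_5$ and the $\frakS_5$-set of $10$ exceptional curves (parameterized by unordered pairs from $\{1,\dots,5\}$, with incidence given by disjointness) to funnel any candidate group $G$ down to $D_{10}$. First I would enumerate the non-cyclic subgroups $G$ of $\frakS_5$ up to conjugacy and eliminate those that are either non-minimal on $X$ or lack a fixed point. For minimality, I would use Proposition~\ref{prop:trace} together with the combinatorics of the Petersen graph: a subgroup is minimal exactly when every orbit of $(-1)$-curves contains at least one intersecting pair, equivalently when no invariant set of mutually skew exceptional curves can be contracted $G$-equivariantly. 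For fixed points, I would use the tangent-space observation from the introduction: $G$ must embed in $\GL(2)$, so in particular every noncyclic minimal candidate must admit a faithful two-dimensional complex representation.

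Next I would run through the maximal subgroups $\frakS_3\times 2$, $\frakS_4$, $\frakA_5$, $5{:}4$ and their noncyclic subgroups. The groups containing $\frakA_5$ or $5{:}4$ (i.e.\ $\frakS_5,\frakA_5, 5{:}4$) are minimal because any element of order $5$ sends the vertex $\{1,2\}$ of the Petersen graph around a pentagon, but they have no faithful $2$-dimensional representation, so they admit no fixed point. The copy of $\frakS_3\times 2$ generated by $(123)$ and $(45)$ fixes the exceptional curve $\{4,5\}$, hence is non-minimal; similarly $\frakS_4$ leaves the four skew lines $\{i,5\}$ invariant and is non-minimal. Subgroups of $\frakS_3\times 2$ and $\frakS_4$ inherit a $G$-invariant exceptional curve or a $G$-invariant set of skew curves, contradicting minimality. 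This leaves only the noncyclic subgroups containing an element of order $5$ but not contained in the above non-minimal families; the only such group is $D_{10}$.

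Having identified $G\cong D_{10}$, I would verify minimality and the existence of a fixed point using the explicit model afforded by the $\frakS_5$-equivariant isomorphism $X\cong P_1^5 = (\bbP^1)^5{/\!/}\PGL(2)$ already recalled in the excerpt. Taking $G=\langle\sigma,\tau\rangle$ with $\sigma=(12345)$ and $\tau=(25)(34)$, the points $p_0=(1,\epsilon_5,\epsilon_5^2,\epsilon_5^3,\epsilon_5^4)$ and $p_1=(1,\epsilon_5^3,\epsilon_5,\epsilon_5^4,\epsilon_5^2)$ are $G$-fixed: $\sigma$ acts by simultaneous multiplication by $\epsilon_5$ on each coordinate (projectively trivial) and $\tau$ is induced by $z\mapsto z^{-1}$ on each factor. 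Minimality then follows because any $G$-orbit of exceptional curves contains a pair meeting nontrivially (the pentagon orbit), so no $G$-equivariant contraction to $\bbP^2$ or a ruled surface exists.

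Finally I would produce the birational $G$-isomorphism to $\bbP^2$ with a surviving fixed point. Considering $X$ as the blow-up of four points in $\bbP^2$, I would examine the linear system of plane cubics passing through those four points and having a double point at the image of $p_0$. This is a two-dimensional linear system, $G$-invariant, and defines a $G$-equivariant rational map $X \dasharrow \bbP^2$; the point $p_1$, which lies off every exceptional curve (since each $G$-orbit of exceptional curves contains skew members, so no exceptional curve is $G$-fixed and hence none passes through a fixed point), is carried by this map to a $G$-fixed point of $\bbP^2$. The only real subtlety here is verifying that $p_1$ does not lie on any exceptional curve and that the linear system has the claimed dimension; the former follows from the above orbit analysis on the Petersen graph, and the latter from a straightforward dimension count.
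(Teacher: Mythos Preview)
Your proposal is correct and follows essentially the same approach as the paper: the same enumeration of maximal subgroups of $\frakS_5$, the same minimality arguments via the Petersen graph and the same elimination of $\frakS_5$, $\frakA_5$, $5{:}4$ by the $\GL(2)$ constraint, the same $P_1^5$ model with the explicit fixed points $p_0,p_1$, and the same linear system of cubics to produce the $G$-birational map to $\bbP^2$. One small wording fix: the reason no exceptional curve passes through a fixed point is not merely that no exceptional curve is $G$-fixed, but that if one did pass through $p$ then its entire $G$-orbit would, contradicting the presence of skew members in the orbit---this is what the paper (and implicitly you) are using.
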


\section{del Pezzo surfaces of degree $4$}
\label{sec:DP4}

We recall several facts from Section~6.4~of~\cite{DI}.
Any quartic del Pezzo surface $X$ is isomorphic to a smooth surface in
$\bbP^4$ given by the  equations
\[
\sum_{i=1}^5 t_i^2 = \sum_{i=1}^5 a_it_i^2 = 0,
\]
where $a_i\ne a_j$ whenever $i\ne j$.

The natural representation of $\Aut(X)$ on the Picard group of $X$
defines an isomorphism $\rho$ of $\Aut(X)$ onto a subgroup of the Weyl
group $W(D_5) \cong 2^4 : \frakS_5$. The normal subgroup $2^4$
is always in the image of $\rho$ and acts on $X$ by multiplying 
an even number of coordinates by $-1$.
The image of $\Aut(X)$ in $\frakS_5$ could be one
of the following groups: $1, 2, \frakS_3, 4$, and $D_{10}$. 

Each element of $2^4$ is represented by a subset $A$ of $\{ 1, 2, 3, 4,
5 \}$ corresponding to the indices of the coordinates $t_i$ that are
multiplied by $-1$.
Since $\Aut(X)$ acts on the projective space $\bbP^4$, we may identify
each subset of $A$ with its complement.
Thus, it suffices to assume that the cardinality of $A$ of a non-trivial
element is equal to $1$ or $2$.
The corresponding involution $\iota_A$ is called \emph{of the first kind} or
\emph{of the second kind}, accordingly.
 
We denote by $E_k$ the genus 1 curve cut out by the hyperplane section
$t_k = 0$. The group $\Aut(X)$ acts on the set of such curves with
kernel of the action equal to $2^4$.
The fixed point set on $X$ of each $\iota_k$ is precisely the corresponding
genus 1 curve $E_k$.

We now discuss how to see the action of $W(D_5)$ on the exceptional
divisors of $X$ and its connection to the plane model.
Recall that $X$ is isomorphic to the blow-up of 5 points
$x_1,\ldots,x_5$ in the projective plane.
We label the 16 exceptional divisors of $X$: let $R_1$, \ldots, $R_5$ be
the exceptional curves corresponding to the points $x_i$, let $R_{ij}$
be the strict transforms of the lines $\overline{x_i,x_j}$,
and let $R_0$ be the strict transform of the conic through the points
$x_1$, \ldots, $x_5$.
Each geometric marking corresponds to a choice of the 5 disjoint lines
$R_i$.
There are $2^4$ such subsets and the Weyl group $W(D_5)$ has $2^4$
conjugate subgroups isomorphic to $\frakS_5$; each of them leaves
invariant the set of the divisor classes of 5 disjoint lines.

Each of the involutions $\iota_k$ is given by a de Jonqui\`eres
involution of the plane model with center at the point $x_k$
(see Section~2.3~of~\cite{DI}).
The involution is given by the linear system of cubics through the
points $x_i, i\ne k,$ and a singular point at $x_k$.
The image of $E_k$ is the unique plane cubic curve that passes through
the points $x_1,\ldots,x_5$ with tangent direction at each point $x_j,
j\ne k,$ equal to the line $\overline{x_j,x_k}$.
The de Jonqui\`eres involution preserves the
pencil of lines through the point $x_k$.

It follows from the
construction of de Jonqui\`eres involutions that $\iota_k$ interchanges
$R_i$ with $R_{ik}$, and $R_k$ with $R_0$. The remaining set of 6
lines $R_{ij}$, where $i,j\ne k,$ consist of three orbits of pairs of
intersecting lines.
Note that, even though no orbits of $(-1)$-curves can be blown down,
the subgroup generated by $\iota_k$ does \emph{not} give $X$ the
structure of a $G$-minimal del Pezzo surface.
However, $X$ is $G$-minimal considered as a conic bundle defined by the
pencil of conics given by the proper inverse transforms of the lines
through $x_k$.

It follows that the involution $\iota_{kl} = \iota_k\circ \iota_l$
interchanges the disjoint lines $R_0$ and $R_{kl}$; thus, it does not
act minimally.
The fixed points of $\iota_{kl}$ are precisely the four intersection
points of the two genus 1 curves $E_k$ and $E_l$.
The only minimal subgroups of $2^4$ with fixed points are those that
contain exactly two involutions of the first kind.

In Section~8~of~\cite{DI}, it is shown that any minimal del Pezzo
$G$-surface of degree $4$ with a fixed point is equivariantly
birationally equivalent to a $G$-minimal conic bundle.
However, the conic bundle may not have a fixed point.
We clarify the situation as follows:

\begin{lem}
Suppose $X$ is a minimal del Pezzo $G$-surface.
\begin{enumerate}
\item If $G$ has more than one fixed point or $G$ is abelian,
then $X$ is birationally equivalent to a minimal conic bundle
with a fixed point.
\item If $G$ has exactly one fixed point and $G$ is non-abelian,
then $X$ is not birationally equivalent to a minimal conic bundle
or non-isomorphic minimal del Pezzo surface with a fixed point.
\end{enumerate}
\end{lem}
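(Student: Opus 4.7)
The strategy uses two main tools: Proposition~A.2 of~\cite{RY}, according to which the existence of a fixed point is a $G$-equivariant birational invariant whenever $G$ is abelian, and the classification of Sarkisov links emanating from a $G$-minimal quartic del Pezzo found in Section~8 of~\cite{DI}. The passage immediately preceding the lemma already extracts from \cite{DI} that $X$ is $G$-equivariantly birational to some $G$-minimal conic bundle; what remains is to control whether a fixed point survives under that link.

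For part (1), first suppose $G$ is abelian. Then Proposition~A.2 of~\cite{RY} yields the conclusion directly: the conic bundle produced by the cited link, being $G$-birational to $X$, must carry a fixed point. Next, suppose $G$ has more than one fixed point. By the description of $G$-fixed points given earlier in this section, every fixed point of $G$ on $X$ lies in the four-element set $E_k \cap E_l$ for some pair with $\iota_k, \iota_l \in G$, so $G$ must fix at least two of those four intersection points. Running the Sarkisov link that starts with the blow up of one such fixed point $p$, the $G$-orbit of $(-1)$-curves subsequently contracted to produce the conic bundle is disjoint from any second fixed point $p'$, so $p'$ descends to a fixed point on the resulting conic bundle.

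For part (2), assume $G$ is non-abelian with exactly one fixed point $p$, and suppose for contradiction that some minimal $G$-surface $Y$ not isomorphic to $X$ is $G$-birational to $X$ and carries a fixed point $q$. Lemma~\ref{lem:Sarkisov} rules out del Pezzos of degree $\le 3$, and the link classification from Section~8 of \cite{DI} rules out non-isomorphic degree-$4$ del Pezzos, so $Y$ must be a minimal conic bundle. Resolve $\phi : X \dashrightarrow Y$ by a smooth $G$-equivariant model $Z$ with $G$-equivariant morphisms $\pi_X : Z \to X$ and $\pi_Y : Z \to Y$. Any fixed point $\tilde q \in Z$ satisfies $\pi_X(\tilde q) = p$ and so lies in the exceptional tree over $p$; in particular, the first blown-up exceptional divisor $E = \bbP(T_p X) \cong \bbP^1$ receives the action of $G$ via its image in $\PGL(2)$.

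The main obstacle is the ensuing case analysis: since $G \subset \GL(T_p X) = \GL(2)$ is non-abelian, one has to enumerate the possibilities for its image in $\PGL(2)$ and verify in each case that every candidate fixed point on the exceptional tree either fails to survive on $Y$ (because $\pi_Y$ contracts a curve through it) or, if it does survive, forces an additional fixed point on $X$ contradicting the uniqueness of $p$. Organizing this verification around the fixed locus of $\iota_{kl}$, which contains $p$ and meets $E$ in the two eigenlines of $\iota_{kl}$ on $T_pX$, is expected to be the technical heart of the argument.
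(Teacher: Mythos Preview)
Your treatment of part~(1) is close to the paper's. The paper also blows up a fixed point $p$ to obtain a $G$-minimal conic bundle $X'$ with $K_{X'}^2=3$, and observes that a second fixed point survives; for the abelian case the paper argues directly that the action on the exceptional curve $R\cong\bbP^1$ is cyclic and hence has a fixed point, whereas you invoke Proposition~A.2 of~\cite{RY}. Both are valid.

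For part~(2) your approach is substantially more complicated than the paper's and is left unfinished. The paper does not resolve an arbitrary birational map $X\dasharrow Y$ or analyse an exceptional tree. Instead it argues in two short steps. First, the explicit conic bundle $X'$ obtained by blowing up the unique fixed point $p$ carries no $G$-fixed point: away from $R$ one has $X'\setminus R\cong X\setminus\{p\}$, and on $R\cong\bbP^1$ the image of $G$ in $\PGL(2)$ is non-cyclic (else $G$ would be abelian), so $R^G=\varnothing$. Second, the paper continues to use the Sarkisov classification, but now \emph{from} $X'$ rather than from $X$: for a conic bundle with $K^2=3$ the only links that change the isomorphism class are of Type~II, i.e.\ compositions of elementary transformations, and these neither change $K^2$ nor create new fixed points. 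Since the only link out of $X$ is the Type~I link to $X'$, every minimal model birational to $X$ other than $X$ itself is obtained from $X'$ by such elementary transformations, hence has no fixed point.

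The case analysis you propose (enumerating images of $G$ in $\PGL(2)$ and tracking fixed points through the exceptional tree) is therefore unnecessary. Moreover, as stated your outline has a gap: you implicitly assume $\pi_X:Z\to X$ blows up $p$, but if $\phi$ is regular at $p$ then the ``exceptional tree over $p$'' is just a point, $Z$ has the fixed point $\pi_X^{-1}(p)$, and no contradiction arises from your argument. The paper avoids this by never considering an abstract $Y$: once you know $X'$ has no fixed point and that every further link from $X'$ is an elementary transformation, you are done.
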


\begin{proof}
Let $p$ be a $G$-fixed point on $X$.
Blowing up the point $p$ we obtain a
weak del Pezzo surface $X'$ of degree 3 with $\Pic(X')^G \cong \bbZ^2$.
The linear system $|-K_{X'}-R|$, where $R$ is the exceptional curve of
the blow-up, defines on $X'$ a structure of a $G$-minimal conic bundle.

If $X$ has more than one fixed point, then $X'$ also has a fixed point.
Also, if $G$ is abelian then the induced action on $R \cong \bbP^1$ is
cyclic and thus $X'$ again has a fixed point.

However, if $X$ has a unique fixed point and $G$ is non-abelian,
then the new surface $X'$ does
not have a fixed point.  Indeed, the exceptional curve $R$ has an action
of $G$; since $G$ is not abelian the image of its action is not cyclic
and $R \cong \bbP^1$ cannot have a fixed point.

Conceivably, there might be a third minimal $G$-surface $X''$ birational
to $X$ which \emph{does} have a fixed point.
We consult the classification of elementary links in
Section~7.4~of~\cite{DI}.
From $X$, 
the link of Type I to $X'$ as described above is the only link which
changes the isomorphism class of $X$.  The conic bundle $X'$ satisfies
$K_{X'}^2=3$ and the only links which change the isomorphism class are
links of type II.  These are simply compositions of elementary
transformations and cannot introduce new fixed points, nor change the
value of $K_{X'}^2$.
\end{proof}

We now prove the main result of this section. 

\begin{thm}[Case \ct{4}] \label{thm:DP4}
Let $X$ be a minimal del Pezzo $G$-surface of degree $4$.
Suppose $G$ has a fixed point and is not birationally
equivalent to a minimal conic bundle with a fixed point.
Then $X$ is isomorphic to the $G$-surface 
\[
t_1^2 + \epsilon_3 t_2^2 + \epsilon_3^2 t_3^2 + t_4^2 =
t_1^2 + \epsilon_3^2 t_2^2 + \epsilon_3 t_3^2 + t_5^2 = 0,
\quad \epsilon_3 = e^{2\pi i/3},
\]
whose automorphism group (as an ordinary surface)
is generated by $2^4$ along with the transformations
\begin{align*}
g_1:(t_1:t_2:t_3:t_4:t_5) &\mapsto
(t_2:t_3:t_1:\epsilon_3t_4:\epsilon_3^2t_5)\\
g_2:(t_1:t_2:t_3:t_4:t_5) &\mapsto (t_1:t_3:t_2:t_5:t_4)\ .
\end{align*}
The group $G$ is isomorphic to one of the following groups:
\[ 2^2:\frakS_3,\ 3: 4 \]
with the unique fixed point $p=(1:1:1:0:0)$.
\end{thm}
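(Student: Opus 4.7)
The plan is to apply the preceding Lemma to conclude that the hypotheses force $G$ to be non-abelian with exactly one fixed point $p \in X$. Via the tangent-space representation, $G$ embeds in $\GL(T_pX) \cong \GL(2)$. I would analyze $G$ through the extension $1 \to K \to G \to H \to 1$, where $K = G \cap 2^4$ and $H$ is the image of $G$ in the allowed set $\{1, 2, \frakS_3, 4, D_{10}\} \subseteq \frakS_5$ of images of $\Aut(X)/2^4$.

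First I would constrain $K$ and $p$. A direct calculation on the pencil $\sum t_i^2 = \sum a_i t_i^2 = 0$ (using $a_i \ne a_j$) shows that no point of $X$ has more than two vanishing coordinates. An involution $\iota_A \in 2^4$ fixes $p \in X$ if and only if, up to the projective identification $\iota_A = \iota_{A^c}$, the set $A$ is contained in $Z(p) := \{i : p_i = 0\}$. Thus $K$ is a subgroup of the Klein four-group $\la \iota_i : i \in Z(p)\ra$ and in particular is abelian. Since $G$ is non-abelian, $H$ must itself be non-abelian, narrowing the possibilities to $H \in \{\frakS_3, D_{10}\}$.

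Next I would rule out $H = D_{10}$. A $5$-cycle in $H$ permutes all five indices without fixing any, so $Z(p) = \emptyset$ and hence $K = 1$; consequently $G \cong D_{10}$. The $D_{10}$-symmetry of the pencil of quadrics forces, up to coordinate change, a specific surface on which the five parameters $a_i$ form a $D_{10}$-orbit on the pencil $\bbP^1$; an explicit fixed-point count (using Proposition~\ref{prop:trace} and the diagonalization of the order-$5$ element in $\bbP^4$) then shows that the $D_{10}$-action either has no common fixed point or has more than one, contradicting the uniqueness of $p$. This step is the principal obstacle of the proof.

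For $H = \frakS_3$, the image permutes three of the singular members of the pencil and fixes the other two. After a coordinate change placing the fixed pair at indices $4,5$, this forces $X$ to take the normal form in the statement; $\Aut(X) = 2^4 : \frakS_3$ is then generated by $2^4$ together with the transformations $g_1, g_2$ as given, and $p$ must be the unique joint fixed point $(1{:}1{:}1{:}0{:}0)$ of $g_1$ and $g_2$. Hence $K \subseteq \la \iota_4, \iota_5\ra$. A final case analysis on the subgroups of $\Aut(X)$ that fix $p$, are non-abelian, and act minimally (with minimality checked via Proposition~\ref{prop:trace}) yields exactly the two groups $G = 2^2 : \frakS_3 = \la \iota_4, \iota_5, g_1, g_2\ra$ and $G = 3:4 = \la g_1, g_2\iota_4\ra$ stated in the theorem.
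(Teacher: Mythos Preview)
Your overall strategy---reduce via the Lemma to $G$ non-abelian with a unique fixed point, analyze the extension $1 \to K \to G \to H \to 1$ with $K = G \cap 2^4$, pin down $H = \frakS_3$, and then enumerate---parallels the paper's, and your endgame (the case analysis inside $2^4 : \frakS_3$ together with the minimality check via Proposition~\ref{prop:trace}) matches it.  But two steps in the middle are not justified as written.

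First, the inference ``$K$ is abelian and $G$ is non-abelian, therefore $H$ is non-abelian'' is false in general: an extension of an abelian group by an abelian group can perfectly well be non-abelian (think of $D_8$).  In the present situation the conclusion does hold, but it requires checking the cyclic cases $H \in \{2,4\}$ by hand.  When $Z(p)$ consists of indices fixed by $H$, the action of $H$ on $K$ is trivial; then $G$ is generated by the central subgroup $K$ together with a single lift of a generator of the cyclic group $H$, and is genuinely abelian.  The only way a cyclic $H$ can act non-trivially on $K$ is when $H$ is generated by a transposition (or double transposition) and $Z(p)$ is the $2$-element orbit it moves, say $Z(p)=\{i,j\}$; but then one checks that every lift of this element to $G$ fixing $p$ actually fixes all four points of $E_i \cap E_j$, contradicting uniqueness of $p$.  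None of this is in your write-up.

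Second, your elimination of $H = D_{10}$ is only a sketch (``an explicit fixed-point count \ldots\ shows \ldots'').  The paper bypasses both difficulties with a single idea you are missing.  It first invokes the known fact that any \emph{minimal} subgroup of $\Aut(X)$ meets $2^4$ non-trivially, so $p$ lies on some curve $E_i$.  A subgroup $G' \le G$ of index $\le 2$ then stabilizes $E_i$, and its image in $\Aut(E_i,p)$ (automorphisms of the elliptic curve fixing the origin $p$) has order $2$, $3$, $4$, or $6$, with respectively $4$, $3$, $2$, or $1$ fixed points on $E_i$.  Uniqueness of $p$ forces the image to have order $6$, so $3$ divides $|G|$.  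This single divisibility statement simultaneously pins down the surface and rules out $D_{10}$ (whose order is $10$), with no separate fixed-point computation needed.  Note, incidentally, that your own observation $K=1$ when $H=D_{10}$, combined with the minimality fact just cited, would also dispose of that case at once---so your ``principal obstacle'' is in fact not one.
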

 
\begin{proof}
From the Lemma, it suffices to find $G$-minimal del Pezzo surfaces with
a unique fixed point and $G$ non-abelian.

It is known that any minimal subgroup of $\Aut(X)$ contains a
non-trivial subgroup of $2^4$. Hence, a fixed point $p$ of $G$ must lie
on one of the curves $E_i$.  
Since no
three genus 1 curves $E_1,\ldots,E_5$ have a common point, the group
$G$ contains a subgroup $G'$ of index $\le 2$ that leaves $E_i$
invariant. We may consider $E_i$ as an elliptic curve with the zero
element $p$. Let $A$ be the image of $G'$ in the automorphism group of
the elliptic curve $E_i$. It is known that $A$ is of order 2, 3, 4, or 6.
It has 4, 3,  2, or 1 fixed points, respectively. Thus $A$ must be of
order $6$, hence the order of $G$ is divisible by 3. 

Let $G$ be a group of automorphisms of $X$ of order divisible by 3. It
is known that $X$ is isomorphic to the surface from the assertion of the
theorem. Also, the automorphism group of $X$ is generated by involutions
$\iota_A$ and the subgroup $H = \la g_1, g_2 \ra \cong \frakS_3$.
We fix a plane
model of $X$ as above to assume that $g_1$ acts on $\Pic(X)$ by
permuting cyclically the classes of the exceptional curves $R_1,R_2,R_3$
and  fixing the curves $R_4,R_5$. The element $g_2$ acts by switching
$R_2,R_3$ and $R_4,R_5$.

There are four subgroups of order $3$ in $\Aut(X)$:
\[
\la g_1\iota_{12} \ra,\
\la g_1\iota_{13} \ra,\
\la g_1\iota_{23} \ra,\
\la g_1 \ra
\]
but they are all conjugate.  We may assume without loss of generality
that $g_1$ is in $G$.

Let $K$ be the kernel of the homomorphism $G \to \frakS_3$ and $\bar{G}$
be the image of this homomorphism.
We enumerate all the possible subgroups $K$ of rank $\le 2$
which are invariant under $g_1$:
\[
\la \iota_4 \ra,\
\la \iota_5 \ra,\
\la \iota_{45} \ra,\
\la \iota_4, \iota_5 \ra,\
\la \iota_{12}, \iota_{23} \ra .
\]
Note that $\la \iota_{12}, \iota_{23} \ra$ does not fix a point and can
be eliminated.
The remaining groups are fixed pointwise by $g_1$.
Thus, if $\bar{G}$ is cyclic of order $3$ then $G$ is abelian
and can be eliminated.
It remains to consider $\bar{G} \simeq S_3$.
In this case, only the subgroups $\la \iota_{45} \ra$ and
$\la \iota_4, \iota_5 \ra$ are invariant under $g_2$;
so these are the only possibilities for $K$.

Consider the set $\Gamma \subset 2^4$ of all elements $\iota_A$
such that $g_2\iota_A$ is in $G$.
Note that $g_3g_2\iota_Ag_3=g_2\iota_{(132)A}$ and
$(g_2\iota_A)^{-1}=g_2\iota_{(12)(45)A}$ are also in $G$.
Also, we note that $(g_2\iota_A)^{-1}(g_2\iota_B)=\iota_A\iota_B$.
Thus $\Gamma$ is an $\frakS_3$-invariant set
such that the product of any two elements in $\Gamma$ is in $K$.
We conclude that $\Gamma$ contains only $\id$, $\iota_4$, $\iota_5$ and
$\iota_{45}$.

Thus the only possibilities for $G$ are
\begin{align*}
\la g_2, g_3, \iota_{45} \ra &\cong 2 \times \frakS_3\\
\la g_2, g_3, \iota_4 \ra &\cong 2^2 : \frakS_3\\
\la g_2\iota_4, g_3 \ra &\cong 3 : 4 \ .
\end{align*}
All of these leave fixed the point $(1:1:1:0:0)$.
Appealing to Proposition~\ref{prop:trace},
we see that $2 \times \frakS_3$ is not minimal while the other two
groups are minimal.
\end{proof}

\begin{remark} \label{rem:Prok}
As was first noticed by Yuri Prokhorov (see \cite{Prok}), the groups
$3 : 4$ and $2^2 : \frakS_3$ above were missing from the
classification in \cite{DI}.
We found additional missing groups isomorphic to
$2\times D_8$, $M_{16}$, $2^3:\frakS_3$, and $L_{16}:3$;
as well as a second copy of $L_{16}$
which is not conjugate to existing group in the list.
In addition, the group of order $32$ identified as $2^2:8$
should instead be $2^3:4$.
Here $L_{16}$ and $M_{16}$
are certain groups of order 16 whose structure is described in Table  3
from \cite{DI}.
One finds the corrected statements and the corrected proofs in a version
of the paper at \url{http://www.math.lsa.umich.edu/~idolga/papers.html}.
\end{remark}

\section{del Pezzo surfaces of degree $3$}

Recall that a del Pezzo surface of degree $3$ is a smooth cubic surface
in $\bbP^3$.  Here we prove the following:

\begin{thm}[Case \ct{3}]
\label{thm:cubics}
Suppose $G$ is a non-cyclic group and
$X$ is a minimal cubic $G$-surface with a fixed point $p$.
Then $X$ is equivariantly projectively equivalent to the surface in
$\bbP^3$ cut out by
\[
F = t_0^3 + t_1^3 + t_2^3 + t_4^3 + t_0t_1(a  t_2 + b   t_3)
\]
with fixed point $p = (0:0:1:-1)$,
where $a$ and $b$ are parameters.
The tangent plane at $p$ contains three Eckardt points.
The different possibilities are given in the following table
\begin{center}
\begin{tabular}{|l|l|l|l|l|}
\hline
Name & Possible $G$ & Parameters &  Surface type from \cite{DI} 
\\
\hline
\ct{3.1} & $\frakS_3$ & & I--VI, VIII, V \\
\hline
\ct{3.2} & $\frakS_3 \times 2$ & $a  = b  $ &
I, II, VI \\
\hline
\ct{3.3} & $\frakS_3\times 6$,
$\frakS_3 \times 3$ (twice), & $a =b  =0$ & I \\
 &
$6 \times 3$, $3 \times 3$ & & \\
\hline
\end{tabular}
\end{center}
which have specializations
\[
\xymatrix{
\ct{3.1} \ar[r] &
\ct{3.2} \ar[r] &
\ct{3.3}
} \ .
\]
Note that we do not list those $G$ which already
occur in generizations.
\end{thm}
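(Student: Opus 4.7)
The plan is to use the $G$-fixed point to rigidify $X$ into the stated normal form, and then enumerate the non-cyclic subgroups of $\Aut(X)$ that fix $p$ and act minimally. By Lemma~\ref{lem:Sarkisov} a minimal del Pezzo $G$-surface of degree $3$ is the unique minimal model in its birational equivalence class, so it suffices to exhibit a single projective model per admissible $G$.

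The first and most delicate step is to show that the tangent plane $\Pi := T_pX \subset \bbP^3$ contains three Eckardt points of $X$. The group $G$ embeds into $\GL(T_pX) \cong \GL_2$ as a non-cyclic finite subgroup, and the plane cubic $C := \Pi \cap X$ is $G$-invariant with $p$ as a singular point. A case analysis on the type of this singularity (irreducible nodal or cuspidal; line plus conic; triple point), combined with the classification of non-cyclic finite subgroups of $\GL_2$ and the $G$-minimality constraint from Proposition~\ref{prop:trace}, will produce three Eckardt points $e_0, e_1, e_2 \in \Pi$, permuted cyclically by an order-$3$ element of $G$. In Case~\ct{3.1} the three Eckardt points are distinct from $p$ (and $C$ is an irreducible nodal cubic with node at $p$), while in Cases~\ct{3.2} and~\ct{3.3} the point $p$ is itself an additional Eckardt point.

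Once the Eckardt points are located, a projective change of coordinates sets $p = (0:0:1:-1)$, $\Pi = \{t_2 + t_3 = 0\}$, and $e_k = (1:-\epsilon_3^k:0:0)$. The $\frakS_3$ generated by the cyclic scaling $(t_0, t_1, t_2, t_3) \mapsto (t_0, \epsilon_3 t_1, \epsilon_3^{-1} t_2, \epsilon_3^{-1} t_3)$ and the swap $(t_0, t_1) \mapsto (t_1, t_0)$ fixes $p$ and cyclically permutes the $e_k$. The $\frakS_3$-invariant cubics in $\bbP^3$ form a $7$-dimensional vector space; imposing that $X$ passes through $p$ with tangent plane $\Pi$ (one linear condition) and normalizing via the residual coordinate freedom commuting with $\frakS_3$ and stabilizing $p$ reduces the equation, up to overall scaling, to the stated normal form with free parameters $a, b$. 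Computing $\Aut(X)$ as $(a, b)$ vary then yields $\frakS_3$ generically (Case~\ct{3.1}); $\frakS_3 \times 2$ when $a = b$ via the extra involution $(t_2, t_3) \mapsto (t_3, t_2)$ (Case~\ct{3.2}); and the Fermat automorphism group $3^3 : \frakS_4$ when $a = b = 0$, whose non-cyclic fixed-point subgroups are exactly those listed in Case~\ct{3.3}. Minimality of each resulting $(X, G)$ is verified by evaluating the trace sum in Proposition~\ref{prop:trace}, and the correspondence with the surface types~I--VI, VIII from \cite{DI} is read off from the parameter values.

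The main obstacle is the first step---producing the three Eckardt points in $\Pi$---in particular, excluding $G$-invariant irreducible cuspidal tangent cubics (or reducible cubics of the wrong shape) preserved by non-cyclic subgroups of $\GL_2$, while not accidentally ruling out Case~\ct{3.1} where $C$ is itself irreducible nodal. This requires simultaneous control over the $G$-action on $T_pX$, the tangent cone to $C$ at $p$, and the combinatorics of lines and Eckardt points on $X$.
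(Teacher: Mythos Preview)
Your plan reverses the paper's order of operations, and the reversal is where the gap lies. The paper does \emph{not} begin with the tangent cubic $C$. It first shows, by the elementary observation that any $G$-orbit of the $27$ lines has cardinality divisible by $3$ (since a sum of $k$ lines equivalent to $mK_X$ forces $k=3m$), that $3\mid |G|$. It then takes an order-$3$ element $g\in G$ and classifies its projective type as $(3A)$, $(3C)$, or $(3D)$; the normal form $t_0^3+t_1^3+t_2^3+t_3^3+t_0t_1(at_2+bt_3)$ drops out immediately from the $(3D)$ case (citing \cite{CAG}), and the Eckardt points are then identified \emph{a posteriori} as the three points of $X\cap\ell_1$ where $\ell_1=\{t_2=t_3=0\}$ is the pointwise-fixed line of $g$, via Proposition~9.1.27 of \cite{CAG}. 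The tangent cubic $C=\Pi\cap X$ is analyzed only afterwards, to separate \ct{3.1} (nodal $C$) from \ct{3.2}/\ct{3.3} ($C$ three concurrent lines).

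Your route asks the case analysis on $C$ to \emph{produce} the Eckardt points before any normal form is available, and you correctly flag this as the main obstacle---but you do not resolve it. Concretely: even granting an order-$3$ element $g\in G$ (which you never justify; the $27$-lines argument is missing), the three points you want are $X\cap\ell_1$ where $\ell_1\subset\Pi$ is the $g$-fixed line, and showing that these are Eckardt points of $X$ requires exactly the external input the paper invokes. Nothing in the singularity type of $C$ alone tells you that three lines of $X$ pass through each of those points. Moreover, your scheme does not visibly handle the $(3A)$ case, where $g$ pointwise-fixes a genus-one curve through $p$ (so its eigenvalues on $T_pX$ are $1,\epsilon_3$ rather than $\epsilon_3,\epsilon_3^2$); the paper treats this separately, showing $X$ is then a cyclic cubic and reducing either to the Fermat surface or to a situation where $G$ must contain an element of type $(3C)$ or $(3D)$.

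Your invariant-theory reduction to the normal form is correct in its dimension count, but it is doing by hand what the paper gets for free by citing the classification of cubics admitting a $(3D)$ automorphism. In short: the plan is coherent, but the step you identify as the obstacle is genuinely the whole proof, and the paper's detour through the $(3A)/(3C)/(3D)$ trichotomy is precisely what makes that step tractable.
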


\begin{proof}
We begin by considering the case \ct{3.3}.
Here $X$ is  the Fermat cubic surface
\[X: t_0^3 + t_1^3 + t_2^3 + t_3^3 = 0. \]
The automorphism group of $X$ is $3^3 : \frakS_4$
of order $648$ (see~\cite{DI} or~\cite{CAG}).
The surface $X$ has $18$ Eckardt points; one of which is
$p=(0:0:1:-1)$ and all the others are obtained from $p$ by
automorphisms.  The stabilizer $\Aut(X,p)$ is isomorphic to
$\frakS_3 \times 6$ of order $36$.
We will show that every case specializes to this case.

Now, let $X$ be general as in the theorem.
Since $G$ is minimal, the cardinality of any orbit on the 27 lines must
be divisible by $3$ (if the sum of $k$ lines is linearly equivalent to
$mK_X$, then $k= 3m$).   Thus, $G$ has order divisible by $3$.

Let $g$ be an element of order $3$ in $G$.
From Table~9.5~of~\cite{CAG}, up to projective equivalence, we have
three different options for the action of $g$ on $\bbP^3$:
\begin{align*}
(3A) \colon & g(t_0:t_1:t_2:t_3) = (\epsilon_3 t_0:t_1:t_2:t_3)\\
(3C) \colon & g(t_0:t_1:t_2:t_3) = (\epsilon_3 t_0:\epsilon_3 t_1:t_2:t_3)\\
(3D) \colon & g(t_0:t_1:t_2:t_3) = (\epsilon_3 t_0:\epsilon_3^2 t_1:t_2:t_3)
\end{align*}
where $\epsilon_3$ is a primitive $3$rd root of unity.

If we assume that $g$ is of the class $(3C)$ then $X$ must be the
Fermat cubic (see Section 9.5.1~of~\cite{CAG}).  The fixed points are all
Eckardt points and so we are in case \ct{3.3}.

Now, we assume that $g$ is of class $(3D)$.
As in Section 9.5.1~of~\cite{CAG}), up to a projective
change of coordinates, the surface $X$ is one of the surfaces stated in
the theorem.  Set $\ell_1:t_2=t_3=0$ and $\ell_2:t_0=t_1=0$.
Note that $\ell_1$ and $\ell_2$ are canonically defined given $g$.
The three points $\ell_2
\cap X$ are the only fixed points of $g$ on $X$;
they are of the form $(0:0:1:-a)$ where $a^3=1$.
Without loss of generality we may take $p=(0:0:1:-1)$.

There is always an involution $\sigma$ which interchanges $t_0$ and $t_1$.
Thus $\frakS_3$ always acts on $X$ fixing $p$.
The line $\ell_1$ is stable under $\frakS_3$ and the three points
$X \cap \ell_1$ are all Eckardt points by Proposition~9.1.27 of~\cite{CAG}.

Consider the tangent space $T_pX \subset \bbP^3$.
One checks that $T_pX$ contains $\ell_1$.
The intersection
$C = T_pX \cap X$ is either a nodal cubic or three concurrent lines.
There is a faithful action of $G$ on $T_pX$ which must leave $C$
invariant.

In the case of $C$ a nodal cubic, we see that
$G \subset \bbG_m : 2$.  Since, $G$ contains an element of order $3$ and
the three points in $\ell_1 \cap C$ must be $G$-invariant.  We see that
$G$ is isomorphic to $\frakS_3$ and we are in case \ct{3.1}.

When $C$ is three concurrent lines, the point $p$ is an Eckardt point
and we have $\frakS_3 \times 2 \subset \Aut(X,p)$ by 
Proposition~9.1.26 of~\cite{CAG}.  The automorphism group of $3$
concurrent lines in $\bbP^2$ is $\bbG_m \times \frakS_3$.  The polar $P$
of $p$ in $X$ is a union of two planes, the tangent plane $t_2+t_3=0$
and the plane $t_2-t_3=0$.  Since both these planes and the line
$\ell_1$ must be $G$-invariant, the only other automorphisms fixing $p$
must be of the form
$(t_0:t_1:t_2:t_3) \mapsto (t_0:t_1:\lambda t_2:\lambda t_3)$
where $\lambda$ is in $\bbC^\times$.
This $\lambda$ can be non-trivial only in the case where $X$ is the
Fermat cubic (and we are then in case \ct{3.3}).
If $\lambda$ is forced to be trivial, then we are in case \ct{3.2}.

Finally, if we assume that $g$ is of the class $(3A)$, then $X$
is cyclic cubic surface
\[
X:t_0^3+F(t_1,t_2,t_3) = 0.
\]
It is a triple cover of $\bbP^2$ ramified at the smooth genus 1 curve
cut out by the plane $t_0= 0$.
The Hessian quartic surface is a union of the plane $t_0=0$ and a cone
over the Hessian cubic curve $H$ associated to $\bbP^2$.
If $X$ has an additional cyclic structure, then the curve $H$ is a
union of 3 concurrent lines and $X$ must be isomorphic to the Fermat cubic
(see Lemma~3.2.4~of~\cite{CAG}).
Since the Fermat cubic was already considered above,
we may assume the cyclic structure is unique and, thus,
$G$ leaves invariant a genus 1 curve containing the fixed point $p$.
This means that $G$ is a central 
extension of a cyclic group $H$ by $3$.
The group $G$ is thus cyclic unless $H$ has order divisible by $3$.
In this latter case, $G$ contains a subgroup isomorphic to $C_3^2$.
This means that $G$ must contain an element of order $3$ whose class is
not of the form $(3A)$ and thus was already discussed above.

It remains to determine which subgroups $G$ of $\Aut(X,p)$ are minimal.
It suffices to consider only \ct{3.3} since the others are generizations
of this case.
First, we list all non-cyclic subgroups of $\frakS_3 \times 6$
up to conjugacy:
\[2^2,\
\frakS_3\ \mathrm{(twice)}, \
3^2, 6 \times 2,\
\frakS_3 \times 2,\
6 \times 3,\
\frakS_3 \times 3\ \mathrm{(twice)},\
\frakS_3 \times 6.
\]
We compute the traces on the space $\calR_X$ using
Proposition~\ref{prop:trace}
as in Table~9.4~of~\cite{CAG}:
\begin{center}
\tiny
\begin{tabular}{|cccc|c|}
\hline
\multicolumn{4}{|c|}{Eigenvalues on $\bbP^4$} & $\tr(\cdot|\calR_X)$ \\
\hline
1 & 1 & 1 & 1 & 6\\
1 & 1 & 1 & $-1$ & -2\\
1 & 1 & $-1$ & $-1$ & 2 \\
1 & 1 & 1 & $\epsilon$ & -3 \\
1 & 1 & $\epsilon$ & $\epsilon$ & 3 \\
1 & 1 & $\epsilon$ & $\epsilon^2$ & 0 \\
1 & 1 & $\epsilon$ & $-\epsilon$ & 1 \\
1 & -1 & $\epsilon$ & $\epsilon$ & 1 \\
1 & -1 & $\epsilon$ & $-\epsilon$ & -1 \\
1 & -1 & $\epsilon$ & $\epsilon^2$ & -2 \\
\hline
\end{tabular}
\end{center}
where $\epsilon$ is a primitive third root of unity.

Note that the subgroup generated by an element with eigenvalues
$1,1,1,\epsilon$ give traces which sum to $0$, thus any group containing
this element is minimal.
Thus, $3 \times 3$, $6 \times 3$, both classes of $\frakS_3 \times 3$, and
$\frakS_3 \times 6$ are minimal groups.
We remark that the eigenvalues of the involutions in the two different
classes of $\frakS_3 \times 3$ are different; thus the two conjugacy
classes are distinct in $\Cr(2)$.

Additionally, the group $\frakS_3$ generated by elements with
eigenvalues $1,1,\epsilon, \epsilon^2$ and $1,1,1,-1$ gives traces which sum
to $0$.
Thus, $\frakS_3$ and $\frakS_3 \times 2$ are minimal.
It remains to establish that $6 \times 2$, $2 \times 2$ and the other
conjugacy class of $\frakS_3$ are not minimal.
The group $6 \times 2$ and the group $\frakS_3$ both have traces which
sum to $12$, so neither is minimal;
the group $2^2$ is a subgroup of $6 \times 2$ and thus, is not minimal.
\end{proof}

\section{del Pezzo surfaces of degree $2$}

Throughout this section, $G$ is a non-cyclic finite group,
$X$ is a minimal del Pezzo $G$-surface of degree $2$, and
$p$ is a $G$-fixed point on $X$.

We recall some features of such surfaces from Section 6.6~\cite{DI}.
Any such surface has an involution $\gamma$ called
the \emph{Geiser involution}.  Its set of fixed points is a smooth curve
$R$ of genus 3.  The quotient by $\gamma$ induces a degree $2$ map
\[ \pi : X \to \bbP^2\]
with branch locus $B \cong R$ a smooth quartic curve.

We may write $X$ as:
\[F(t_0,t_1,t_2)+t_3^2=0 \]
in the weighted projective space $\bbP(1,1,1,2)$, where $F$ is the
degree $4$ form which defines $B$ in $\bbP^2$.  The Geiser involution
is simply the map which takes $t_3$ to $-t_3$.
We have a decomposition $\Aut(X) \cong \Aut(B) \times
\langle \gamma \rangle$.  Note that $\Aut(B)$ is a finite subgroup
of $\PGL(3)$ since $F=0$ is the canonical embedding of $B$.
The possible $\Aut(B)$ can be found in
Theorem~6.5.2~of~\cite{CAG}.

\begin{thm}[Case \ct{2A}]
\label{thm:DP2A}
If $p$ lies on the ramification curve $R$ then the group
$\Aut(X,p)$ is abelian of the form $H \times \langle \gamma
\rangle$ where $H$ is a cyclic subgroup of $\Aut(B)$.
We have the following possibilities  

\begin{center}
\begin{tabular}{|l|l|l|l|}
\hline
Name & Possible $G$ & Surface type from \cite{DI} \\
\hline
\ct{2A.1} & $2^2$ & I--V, VII--X, XII \\
\hline
\ct{2A.2} & $6 \times 2$ & III, VIII \\
\hline
\ct{2A.3} & $4 \times 2$ & II--III, V \\
\hline
\ct{2A.4} & $12 \times 2$ & III \\
\hline
\ct{2A.5} & $8 \times 2$ & II \\
\hline
\end{tabular}
\end{center}
satisfying the specializations
\[
\xymatrix{
\ct{2A.1} \ar[r] \ar[dr] &
\ct{2A.2} \ar[r]&
\ct{2A.4} \\
&
\ct{2A.3} \ar[r] &
\ct{2A.5}
} \ .
\]
Note that we do not list those $G$ which already
occur in generizations.
\end{thm}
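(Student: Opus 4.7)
The plan is to leverage the decomposition $\Aut(X) \cong \Aut(B) \times \langle \gamma \rangle$ together with the fact that $\gamma$ fixes $R$ pointwise. Since $p \in R$, certainly $\gamma \in \Aut(X,p)$. An arbitrary element $(h, \gamma^\epsilon) \in \Aut(X)$ fixes $p$ if and only if $h$ fixes $q := \pi(p) \in B$: the double cover $\pi$ is equivariant with respect to the projection $\Aut(X) \to \Aut(B)$, and $p$ is the unique preimage of $q$ because $p$ lies in the ramification locus. Hence $\Aut(X,p) = H \times \langle \gamma \rangle$ where $H$ is the stabilizer of $q$ in $\Aut(B)$. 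Since $H$ acts faithfully on the one-dimensional tangent space $T_q B$ (any finite group of automorphisms fixing a smooth point acts faithfully on the Zariski tangent space), it embeds into $\bbC^\times$ and is therefore cyclic; in particular $\Aut(X,p)$ is abelian.

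Next, I would enumerate the possible $H$ by running through the classification of automorphism groups of smooth plane quartics (Theorem 6.5.2 of \cite{CAG}), which under the correspondence $B \leftrightarrow X$ also yields the surface types I--XII of \cite{DI}. For each type, I would record the cyclic orders $|H|$ that arise as stabilizers of points on $B$. Since $G = H \times \langle \gamma \rangle$ is cyclic precisely when $|H|$ is odd, non-cyclicity forces $|H|$ to be even. The even orders that occur across the classification are $2, 4, 6, 8, 12$, producing the groups $2^2$, $4 \times 2$, $6 \times 2$, $8 \times 2$, and $12 \times 2$ of the table, together with the indicated surface types. For each surface type one also lists the non-cyclic subgroups of $\Aut(X,p)$, which explains why $\ct{2A.1}$ appears on every row of the surface-type column.

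To confirm that each candidate $G$ is realised by a minimal $G$-surface, I would apply Proposition \ref{prop:trace}. The Geiser involution contributes $\tr(\gamma^*|\calR_X) = -7$, since its fixed locus is the single genus $3$ curve $R$ with no isolated fixed points. For each remaining element $(h, \gamma^\epsilon)$ I would identify the fixed locus on $X$---either a further subcurve of $R$ or isolated points obtained by lifting the fixed locus of $h$ on $\bbP^2$ through $\pi$---and compute the corresponding trace. A direct calculation then shows that the total vanishes for each listed $G$. The specialization diagram reflects the moduli of $B$: requiring $q$ to become a flex or hyperflex enlarges $|H|$ from $2$ to $4$ or $8$, while requiring $q$ to lie at the centre of a cyclic symmetry of order $3$ enlarges $|H|$ from $2$ to $6$ and further to $12$, producing exactly the arrows $\ct{2A.1} \to \ct{2A.3} \to \ct{2A.5}$ and $\ct{2A.1} \to \ct{2A.2} \to \ct{2A.4}$.

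The principal obstacle is the bookkeeping: for each of the twelve surface types one must verify which cyclic stabilizers occur at which points on $B$ and match them to the correct row of the table. A more subtle point is separating conjugacy classes of $G$ in $\Cr(2)$ from those in $\Aut(X)$; by Lemma \ref{lem:Sarkisov} this reduces to distinguishing the $G$-surfaces up to $G$-isomorphism, so the combination of surface type and abstract isomorphism class of $G$ suffices.
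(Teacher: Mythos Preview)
Your proposal is correct and follows essentially the same structure as the paper: identify $\Aut(X,p) = H \times \langle \gamma \rangle$ via the splitting $\Aut(X) \cong \Aut(B) \times \langle \gamma \rangle$, argue that $H$ is cyclic from a one-dimensional tangent-space action (the paper uses $T_pR$ rather than $T_qB$, but these are the same via $\pi|_R$), and then enumerate the even cyclic stabilizers on $B$ from the classification in \cite{CAG}.

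The one place you do more work than necessary is minimality. The paper disposes of it in one line: since $G$ is a non-cyclic subgroup of $H \times \langle \gamma \rangle$ with $H$ cyclic, $G$ must contain $\gamma$ (if not, the projection $G \to H$ is injective and $G$ would be cyclic). But $\langle \gamma \rangle$ alone already makes $X$ minimal, since $\Pic(X)^{\langle \gamma \rangle} = \bbZ K_X$; hence $\Pic(X)^G \subset \Pic(X)^{\langle \gamma \rangle}$ also has rank~$1$. Your proposed element-by-element trace computation via Proposition~\ref{prop:trace} would of course give the same conclusion, but it is not needed here.
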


\begin{proof}
Since $p$ lies on $R$, $\Aut(X,p)$ contains $\gamma$.
It remains only to classify the possible $H$.
Since $H$ acts faithfully on the tangent space to $R$ at $p$,
we see that $H$ is cyclic.
Since $G$ is not cyclic, $\Aut(X,p)$ is not cyclic.
Thus, the possible $H$ are precisely the maximal cyclic subgroups
of $\Aut(B)$ of even order which fix a point on $B$.
From Lemma~6.5.1~of~\cite{CAG}, we obtain the cases
\ct{2A.1}-\ct{2A.5} above.
Minimality follows since the Geiser involution alone is minimal.
\end{proof}

Now, suppose $G$ does not fix any points on the ramification curve $R$.
Then $p$ is not fixed by $\gamma$ and we may assume that $G$ is an
isomorphic lift of  a subgroup $\bar{G}$ of $\Aut(B)$ fixing a point $q
= \pi(p)$ in $\bbP^2$ not lying on $B$.

A del Pezzo surface has 56 exceptional curves (lines) $E_i$ on which $G$
acts. Any orbit of $G$ on the lines consists of $k$ lines whose sum is
linearly equivalent to a multiple of $K_X$. Since $K_X^2 = 2$, this
implies that $k$ is even. Thus the order of $G$ is even and $G$ contains
an involution $\tilde{\tau}$, a lift of an involution $\tau$ of $\bbP^2$
that leaves $B$ invariant. The  set $(\bbP^2)^\tau$ of fixed points of $\tau$
is equal to $\{q\}$ plus a line $L$ that intersects $B$ at four  fixed
points (counted with multiplicities). The set of fixed points of
$\tilde{\tau}$ is the set containing the two points $p$ and $\gamma(p)$
along with a genus 1 curve $\pi^{-1}(L)$.

We claim that $q$ is the intersection point of four bitangents.
Choose the projective coordinates $(t_0,t_1,t_2)$ in $\bbP^2$ such that
$q = (0:0:1)$ and $L: t_2 = 0$.  Then the equation of $B$ has the form
\beq\label{eq1}
t_2^4+2f_2(t_0,t_1)t_2^2+f_4(t_0,t_1) = 0
\eeq
where the involution $\tau$ acts by $(t_0:t_1:t_2)\mapsto (t_0:t_1:-t_2)$
and $f_2$ and $f_4$ are homogeneous polynomials of degree $2$ and $4$,
respectively.
Note that we can rewrite the equation in the form
\beq
(t_2^2+f_2(t_0,t_1))^2+(f_4(t_0,t_1)-f_2(t_0,t_1)^2) = 0 \ .
\eeq
This shows that each line $bt_0-at_1 = 0$, where $(f_4(a,b)-f_2(a,b)^2)
= 0$, is a bitangent of $B$ passing through the point $q$. Thus $q$ is
the intersection point of four bitangents of $B$ as claimed.

The converse was first proven by Sonya Kowalevski \cite{Kow}.
Although we do not use  this result we give a proof. 

\begin{prop} \label{prop:kow}
Suppose a smooth plane quartic curve $B$ has four bitangents meeting at
a point $q$. Then the exists a projective involution $\tau$ of $\bbP^2$ that
leaves $B$ invariant and has the  point $q\in B$ as an isolated fixed point.
\end{prop}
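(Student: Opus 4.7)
The plan is to find projective coordinates in which the desired involution is the sign-change on one coordinate, $\tau:(t_0:t_1:t_2)\mapsto(t_0:t_1:-t_2)$. Choose coordinates on $\bbP^2$ with $q=(0:0:1)$. Note that $q\notin B$, since otherwise every bitangent through $q$ would have to be tangent to $B$ at $q$, but the smooth curve $B$ has a unique tangent direction there, so at most one such bitangent could exist. Hence the coefficient of $t_2^4$ in the defining equation $F$ of $B$ is non-zero, and after rescaling we may write
\[
F = t_2^4 + f_1\,t_2^3 + f_2\,t_2^2 + f_3\,t_2 + f_4,
\]
with $f_i(t_0,t_1)$ homogeneous of degree $i$. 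A direct computation shows that $F\circ\tau = F$ precisely when $f_1 = f_3 = 0$; moreover the fixed locus of $\tau$ on $\bbP^2$ is the line $L=\{t_2=0\}$ together with the isolated point $q$. So it suffices to reduce to the case $f_1=f_3=0$.

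The Tschirnhaus shear $t_2 \mapsto t_2 - f_1/4$ is a projective linear transformation fixing $q$ and killing the $t_2^3$-coefficient, so after applying it we may assume $f_1 \equiv 0$. Parametrize the line $\ell$ through $q$ in direction $(a:b) \in \bbP^1$ by $\{(at,bt,s):(t:s)\in \bbP^1\}$; the restriction
\[
F|_{\ell} = s^4 + f_2(a,b)\,t^2 s^2 + f_3(a,b)\,t^3 s + f_4(a,b)\,t^4
\]
is a binary quartic in $(s,t)$. The line $\ell$ is a bitangent of $B$ exactly when this binary quartic is a perfect square of a binary quadratic. Writing such a square as $(s^2 + \alpha\,ts + \beta\,t^2)^2$ and matching coefficients forces $\alpha = 0$ and $\beta = f_2(a,b)/2$, together with the two residual conditions
\[
f_3(a,b) = 0, \qquad 4\,f_4(a,b) = f_2(a,b)^2.
\]

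Now $f_3(a,b)$ is homogeneous of degree $3$ in $(a,b)$, hence has at most three zeros in $\bbP^1$ unless it vanishes identically. The four distinct bitangent directions through $q$ therefore force $f_3 \equiv 0$, which together with $f_1 \equiv 0$ gives $F\circ\tau = F$. The only non-routine step is recognizing the bitangent condition as the simultaneous vanishing of a cubic and a quartic form in the direction $(a:b)$; once this is isolated, a pigeonhole count on $\bbP^1$ against the degree of the cubic is all that is needed to conclude.
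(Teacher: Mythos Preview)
Your proof is correct and takes a genuinely different route from the paper's. The paper proceeds geometrically: it invokes Proposition~6.1.4 of \cite{CAG} to show that any three of the four bitangents form a syzygetic triad, hence all eight tangency points lie on a conic $C_2$; then $C_2^2$ and $l_1l_2l_3l_4$ cut the same divisor on $B$, so $F = C_2^2 + l_1l_2l_3l_4$, and completing the square in $t_2$ puts $F$ in the form $t_2^4 + 2f_2 t_2^2 + f_4$. Your approach is purely algebraic: after the Tschirnhaus shear kills $f_1$, you read the bitangent condition on a line through $q$ as the pair of equations $f_3(a,b)=0$, $4f_4(a,b)=f_2(a,b)^2$, and then a degree count on the cubic $f_3$ forces $f_3\equiv 0$. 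Your argument is more elementary and entirely self-contained, avoiding the appeal to the syzygetic-triad result; the paper's argument, on the other hand, makes visible the auxiliary conic through the tangency points, which is a piece of classical geometry one might want for other purposes. Either way the endpoint is the same normal form. (Incidentally, your observation that $q\notin B$ clarifies what is evidently a typo in the statement, which reads ``$q\in B$''.)
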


\begin{proof}
By Proposition 6.1.4 from \cite{CAG}, any three of the bitangent lines
form a syzygetic triad of bitangents, i.e. the corresponding six
tangency points lie on a conic. This implies that all eight tangency
points lie on a conic. Choose coordinates so that $q = (0:0:1)$. Let
$\ell_i:l_i = a_it_0+b_it_1 = 0$ and let 
$C_2(t_0,t_1,t_2) = 0$ be the
equation of the conic $K$ passing through the eight tangency points.
Then the polynomials $C_2^2$ and $l_1\cdots l_4$ define  the same
divisor on $B$, hence the equation of $B$ can be written in the form $F
= C_2^2+l_1l_2l_3l_4 = 0$. Let  $C_2 =
t_2^2+2t_2l(t_0,t_1)+q(t_0,t_1) =
(t_2+l(t_0,t_1))^2+q(t_0,t_1)-l(t_0,t_1)^2 =  0$. After we change again
the coordinates $t_2\mapsto t_2+l(t_0,t_1)$, the equation of $B$ is
reduced to the form \eqref{eq1}. The involution $(t_0:t_1:t_2) \mapsto
(t_0:t_1:-t_2)$ is the projective involution of $B$.
\end{proof}

The involution $\tau$ has four fixed points $(a:b:1)$ on $B$,
where $f_4(a,b) = 0$,
and the quotient $E = B/(\tau)$ is a genus 1 curve with equation
\beq\label{eq2}
z^2+2zf_2(x,y)+f_4(x,y) = 0
\eeq
in the weighted projective space $\bbP(1,1,2)$.

\begin{lem}\label{lem:center}
The involution $\tau$ of $B$ belongs to the center of the group $\bar{G}$.
\end{lem}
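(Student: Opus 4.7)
The plan is to prove the stronger statement that every $g \in \bar{G}$ preserves the line $L\subset \bbP^2$ fixed pointwise by $\tau$. Once this is established, centrality of $\tau$ follows immediately: in the coordinates above, where $q=(0:0:1)$ and $L:t_2=0$, the involution $\tau$ acts as $(t_0:t_1:t_2)\mapsto(t_0:t_1:-t_2)$, so any element stabilizing both $\{q\}$ and $L$ preserves the two eigenspaces of $\tau$ and therefore commutes with $\tau$.

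To show that $g$ preserves $L$, I would exploit the normal form \eqref{eq1}. Since $g$ fixes $q$, a matrix representative has the shape $g(t_0), g(t_1) \in \la t_0, t_1 \ra$ and $g(t_2) = a_{31}t_0 + a_{32}t_1 + a_{33}t_2$ with $a_{33}\ne 0$; it suffices to show $a_{31} = a_{32} = 0$. Because $g$ leaves $B$ invariant, $g(F) = \kappa F$ for some nonzero scalar $\kappa$. The polynomial $F = t_2^4 + 2f_2(t_0,t_1)t_2^2 + f_4(t_0,t_1)$ contains no $t_2^3$ term, so the coefficient of $t_2^3$ in $g(F)$ must vanish. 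Expanding
\[
g(F) = g(t_2)^4 + 2 f_2(g(t_0),g(t_1))\,g(t_2)^2 + f_4(g(t_0),g(t_1)),
\]
only $g(t_2)^4$ can contribute a $t_2^3$ term (the other two summands involve $g(t_2)$ to degree at most $2$, while $g(t_0)$ and $g(t_1)$ are free of $t_2$), and that coefficient equals $4a_{33}^3(a_{31}t_0 + a_{32}t_1)$. Setting this to zero forces $a_{31}=a_{32}=0$, so $g(t_2)$ is a scalar multiple of $t_2$ and $g$ preserves $L$.

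The calculation is essentially a one-liner once the setup is in place, so I do not foresee a serious obstacle beyond being careful about the $\PGL$--versus--$\GL$ convention when lifting $g$ to a matrix. The conceptual content is that the $\tau$-symmetry already encoded in the defining equation of $B$ is detected by a single coefficient, forcing every element of $\bar{G}$ fixing $q$ to respect the eigenspace decomposition of $\tau$.
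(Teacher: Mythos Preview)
Your argument is correct. The coefficient computation is clean: once $g$ fixes $q=(0:0:1)$, the matrix has the block--triangular shape you describe, and the vanishing of the $t_2^3$--coefficient in $g^*F=\kappa F$ indeed forces $a_{31}=a_{32}=0$, so $g$ preserves $L$ and commutes with $\tau=\operatorname{diag}(1,1,-1)$.

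Your route differs from the paper's. The paper argues geometrically: for $\sigma\in\bar G$, the conjugate $\tau'=\sigma\tau\sigma^{-1}$ is again a projective involution preserving $B$, fixing $q$, and preserving the pencil of lines through $q$; hence it is an involution with the \emph{same} isolated fixed point as $\tau$, and the paper then asserts $\tau'=\tau$. That last step is stated tersely; one way to justify it is to note that otherwise $\tau'\tau$ would be a nontrivial unipotent element of $\Aut(B)$, impossible since $B$ is a smooth quartic with finite automorphism group. By contrast, your proof bypasses this by directly showing $\sigma(L)=L$ via the normal form \eqref{eq1}. Your approach is more elementary and entirely self--contained; the paper's approach is coordinate--free and highlights why $\tau$ is canonically attached to the pair $(B,q)$ rather than to the chosen equation. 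Both reach the same conclusion, and your version arguably fills in the detail the paper leaves implicit.
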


\begin{proof}
For any $\sigma\in \bar{G}$, the element $\tau' = \sigma\tau\sigma^{-1}$ fixes
$q$ and leaves invariant the set of the bitangents of $B$ that contain
$q$. Thus it leaves invariant the pencil of lines through $q$. This
shows that $\tau'$ is an involution of $\bbP^2$ with the same isolated
fixed point as $\tau$.  Thus $\tau$ and $\tau'$ must coincide.
\end{proof}

Since the polynomial $f_4$ has four distinct roots, we can choose
projective coordinates $t_0,t_1,t_2$ in the plane to assume that
\[
f_2(t_0,t_1) = at_0^2+bt_0t_1+ct_1^2, \quad
f_4(t_0,t_1) = t_0^4+dt_0^2t_1^2+t_1^4 \ .
\]
The only condition on the coefficients here is $d^2\ne 4$ expressing the
fact that $f_4$ has no multiple roots, or, equivalently, the curve $B$
is nonsingular.

We may assume that $\bar{G}$ acts via its lift to $\GL(3)$ as the
group of matrices of the form
$\bsm
\alpha & \beta & 0\\
\gamma & \delta & 0\\
0 & 0 & 1
\esm$.
Thus the group $\bar{G}$ is naturally identified with a subgroup of $\GL(2)$.
The transformation $\tau$ is defined by the matrix $-I_2$. 

We want to find a list of maximal non-cyclic subgroups of $\GL(2)$,
up to conjugacy, that leave $f_2$ and $f_4$ invariant.
The automorphism $\tau$ is always present.
Let $H$ be the automorphism group of $f_4=0$ viewed as a set of $4$ points
in $\bbP^1$.
Let $K$ be the image of $G$ in $\PGL(2)$; note that $K \subset H$.
Consulting Section~5.5~of~\cite{DI}, we see that for $f_4$ in the
coordinates above, $H$ is either
$2^2$ for general $d$,
$\frakA_4$ for $d^2=-12$,
or $D_8$ for $d=0$.
If $f_2=0$ then the kernel of $G \to K$ is cyclic of order $4$
and the possible maximal $G$ are, respectively,
$4.2^2$, $4.\frakA_4$ and $4.D_8$.

Suppose $f_2 \ne 0$.
The kernel of $G \to K$ is precisely $\la \tau \ra$.
Since $K$ must leave a pair of points invariant,
it is isomorphic to one of $1$, $2$, $3$, $4$ or $2^2$.
We may discount $1$ and $3$ since we consider non-cyclic $G$.
Since $G$ must be a subgroup of $4.2^2$ or $4.D_8$,
all of its elements act by scaling $t_0$ and $t_1$ while possibly
interchanging them.
Thus, all the remaining possibilities arise when $a$, $b$, or $c$ is zero, or
when $a=c$.

Note that if $a=c$ then we may instead assume $b = 0$ via the
the linear change of variables
\[ (t_0,t_1) \mapsto (\delta (t_0-t_1), \delta (t_0+t_1) ) \]
for some $\delta$ satisfying $\delta^4=(2+d)^{-1}$.
Accounting also for the symmetry between $a$ and $c$,
we enumerate the possibilities in Table~\ref{tab:maximalG}.

\begin{table}[ht]
\begin{center}
\begin{tabular}{|l|l|l|}
\hline
$f_4$ & $f_2$ & Maximal $G$ \\
\hline
any $d$ & $a-c,a,c \ne 0$, $b = 0$ & $2^2$ \\
\hline
any $d$ & $a = c\ne 0$, $b=0$ & $D_8$\\
\hline
$d = 0 $ & $a = b = 0, c \ne 0$ & $2\times 4$ \\
\hline
$d \ne 0,d^2 \ne -12$ & 0 & $4.2^2$ \\
\hline
$d^2 = -12$ & 0 & $4.\frakA_4$ \\
\hline
$d = 0$ & 0 & $4.D_8 \cong 4^2:2$ \\
\hline
\end{tabular}
\end{center}
\caption{
Maximal non-cyclic subgroups $G$ of $\GL(2)$
leaving $f_2$ and $f_4$ invariant.}
\label{tab:maximalG}
\end{table}

Our group $G$ is a minimal isomorphic lift of a subgroup $\bar{G}$ of $
\Aut(B)$ as above.  Following Section~6.6~of~\cite{DI},
we say a lift is \emph{even} if the group $G$ in its
representation in $W(E_7)$ is contained in the normal subgroup
$W(E_7)^+$ of index 2, and a lift is \emph{odd} otherwise.

\begin{remark}
The classification of minimal groups of automorphisms of degree 2 del
Pezzo surfaces from \cite{DI} has the following errors.
\begin{enumerate}
\item $\la \gamma \ra$ is missing from all types (except XII).
\item Type XIII is missing completely.
\item $2^2\times \la \gamma \ra$ was omitted in Type I surfaces.
\item An even lift of $Q_8$ was omitted in Types II, III and V.
\item $2 \cdot \frakA_4 \cong Q_8 : 3$ in Type III
(not $D_8 : 3$).
\item $\frakA_4\times \la \gamma \ra$ was omitted in Type IV.
\item $C_3 \times \la \gamma \ra$ was omitted in Type III.
\end{enumerate}
One finds the corrected statements and the corrected proofs in a version
of the paper at \url{http://www.math.lsa.umich.edu/~idolga/papers.html}.
\end{remark}

\begin{thm}[Case \ct{2B}]
\label{thm:DP2B}
Let $G$ be a minimal group with a fixed point $p$ that is not in the
ramification curve $R$, then $B$ is isomorphic to the plane quartic
curve \[F = t_2^4 + t_2^2(at_0^2+ct_1^2) + t_0^4 +
dt_0^2t_1^2+t_1^4 = 0 \] and the fixed point is a lift of $(0:0:1)$.

We have the following cases:
\begin{center}
\begin{tabular}{|l|l|l|l|l|}
\hline
Name & Possible $G$ & Parameters & Surface type from \cite{DI}  \\
\hline
\ct{2B.1} & $D_8$ & {\tiny $a = c \ne 0$} & I-V, VII \\
\hline
\ct{2B.2} & $2 \times 4$
& {\tiny $a  =  d = 0$} & II, III, V  \\
\hline
\ct{2B.3} & $4\cdot 2^2 \cong 2.D_8$,
$Q_8$ & {\tiny $a  =  c = 0$} & II, III, V \\
\hline
\ct{2B.4} & $4 \cdot \frakA_4$,
$2 \cdot \frakA_4$ &
{\tiny $a  =  c = 0$, $d ^2 = -12$} & III \\
\hline
\ct{2B.5} & $4 \cdot D_8$,
$4 \times 4$ & {\tiny $a  =  c = d = 0$}
& II \\
\hline
\end{tabular}
\end{center}
satisfying the specializations
\[
\xymatrix{
\ct{2B.1} \ar[r] &
\ct{2B.3} \ar[r] \ar[dr]&
\ct{2B.4} \\
&
\ct{2B.2} \ar[r] &
\ct{2B.5}
} \ .
\]
Note that we do not list those $G$ which already
occur in generizations.
\end{thm}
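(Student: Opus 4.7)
The plan is to combine the setup already in place with a case analysis through Table~\ref{tab:maximalG}. The discussion preceding the theorem has already reduced us to the situation in which $f_2 = at_0^2 + ct_1^2$ and $f_4 = t_0^4 + dt_0^2t_1^2 + t_1^4$; the point $q = (0:0:1)$ is the isolated fixed point of the central involution $\tau$; and the maximal non-cyclic subgroups $\bar G \subset \GL(2)$ preserving $f_2$ and $f_4$ are tabulated in Table~\ref{tab:maximalG}. My task is therefore to lift each such $\bar G$ to $\Aut(X,p)$ and extract the minimal non-cyclic subgroups with a fixed point.

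The first ingredient is a lifting principle. Under the decomposition $\Aut(X) = \Aut(B) \times \la \gamma \ra$, each $\bar g \in \bar G$ has exactly two preimages, $\tilde g$ and $\gamma \tilde g$, and they act on the fibre $\pi^{-1}(q) = \{p, \gamma(p)\}$ in opposite ways. Hence a unique preimage fixes $p$, and these preimages assemble into a subgroup $\tilde G \cong \bar G$ of $\Aut(X,p)$. Any candidate group $G$ in the theorem is then an abstract subgroup of this $\tilde G$; moreover, from the discussion preceding the theorem $G$ must contain the lift $\tilde\tau$ (an involution of the first kind), and this lift is central in $\tilde G$ by Lemma~\ref{lem:center}.

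The central step is, for each of the six rows of Table~\ref{tab:maximalG}, to enumerate the non-cyclic subgroups $G \subset \tilde G$ containing $\tilde\tau$ and to apply the minimality criterion of Proposition~\ref{prop:trace}. For each $\sigma \in G$ the trace on $\calR_X$ is computed by counting isolated fixed points and fixed curves of $\sigma$ on $X$, read off from the explicit action on $(t_0:t_1:t_2)$ and on the equation for $t_3$. In the rows with $f_2 = 0$ the stabilizer acquires an extra central element of order $4$ that scales $(t_0,t_1)$ by $i$; this produces the enlarged groups $4\cdot 2^2$, $4\cdot\frakA_4$, $4\cdot D_8$ of cases \ct{2B.3}--\ct{2B.5}, whose minimal non-cyclic subgroups not already minimal in a generization are $Q_8$, $2\cdot\frakA_4$ and $4\times 4$ respectively. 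The specialization arrows follow from the inclusions of parameter strata in the $(a,c,d)$-space: $\{a=c\ne 0\}\supset\{a=c=0\}\supset\{a=c=0,\,d^2=-12\}$ and $\supset\{a=c=d=0\}$, with $\{a=d=0\}$ specializing into the last locus. Each $\tilde G$ is then matched to a surface type I--XIII by cross-referencing Section~6.6~of~\cite{DI}.

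The chief obstacle is the bookkeeping: for each row one must enumerate the subgroups of $\tilde G$ up to conjugacy in $\Aut(X)$, apply Proposition~\ref{prop:trace} to separate minimal from non-minimal (for instance, distinguishing the minimal $Q_8$ from its non-minimal $2^2$ subgroup and its non-minimal complement in $4\cdot 2^2$), and ensure that each abstract group is recorded exactly once across the five cases by listing it at the most generic parameter locus on which it arises. A secondary verification needed is that no minimal $G$ fixing $p$ can lie outside the stabilizers constructed from Table~\ref{tab:maximalG}: this is precisely what the classification of maximal $\bar G\subset\GL(2)$ preserving $(f_2,f_4)$ together with Lemma~\ref{lem:center} guarantees.
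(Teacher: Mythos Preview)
Your outline is correct and follows essentially the same route as the paper: reduce to Table~\ref{tab:maximalG}, lift $\bar G$ uniquely to the stabilizer of $p$, and sort subgroups by minimality via Proposition~\ref{prop:trace}. The paper carries out a few of the trace computations explicitly (e.g.\ every involution has trace $-1$, so $2^2$ is never minimal; the order-$4$ elements with eigenvalues $(i,-i)$ and $(1,i)$ have traces $-1$ and $-3$ respectively), and in case \ct{2B.5} it singles out one piece of the bookkeeping you allude to: the subgroup $M_{16}\subset 4\cdot D_8$ must be excluded, which the paper does by showing that the order-$8$ element in the lift fixing $p$ has four fixed points and is therefore an \emph{odd} lift, hence not the minimal $M_{16}$ from \cite{DI}. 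This is exactly an instance of your trace-based bookkeeping, but it is the one subgroup whose exclusion is not immediate from the small-order computations, so you should flag it when you write out the details.
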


\begin{proof}
We may assume that the fixed point is $(0:0:1)$ and that $G$ acts as a
subgroup of $\GL(2)$ on the coordinates $(t_0, t_1)$.
The maximal $G$ and the appropriate parameters can be obtained from
Table~\ref{tab:maximalG}.  It remains only to determine which subgroups
make $X$ minimal.
Our main tool is Proposition~\ref{prop:trace} and the classification
in Table~7~of~\cite{DI} (which was derived using the same method).

Observe that any involution in $\PGL(3)$ fixes an isolated point and a
line.  One lift to $\Aut(X)$ fixes points only on $R$ and thus is
excluded.  The other fixes a pair of points and a genus 1 curve,
thus every involution has trace $-1$ on $\calR_X$.
In particular, $G \cong 2^2$ has sum of traces equal to $4$ and cannot
be minimal.

An element of order $4$ in $G$ with eigenvalues $i,-i$ in $\GL(2)$ fixes
2 points on $X$ and thus has trace $-1$ on $\calR_X$.
From this we conclude that both $D_8$ and $Q_8$ are minimal groups.
In particular, the group $D_8$ in \ct{2B.1} is minimal.

The element of order $4$ with a generator having eigenvalues $1,i$ in
$\GL(2)$ fixes a genus 1 curve on $X$ and thus has trace $-3$.
The group it generates is minimal.
Thus the group $2 \times 4$ from \ct{2B.2} is minimal.

Now we refer to Table~7~of~\cite{DI}.  The cases \ct{2B.1} and \ct{2B.2}
are finished.  For \ct{2B.3}, we note that $Q_8$ appears and that
$4\cdot 2^2$ is minimal since it contains $D_8$.  For \ct{2B.4},
$2\cdot \frakA_4$ and $4\cdot \frakA_4$ contain $Q_8$ and are therefore
minimal.

In the case of \ct{2B.5}, we only need to consider subgroups of a
Sylow $2$-subgroup of $\Aut(X)$ isomorphic to $4 \cdot D_8$.
The group $4^2$ contains $2 \times 4$ and is thus minimal.  The group
$4 \cdot D_8$ is similarly minimal.
It remains to show that the even lift of $M_{16}$ does not fix $p$.
We will do this by showing that the cyclic subgroup of order $8$ within
is an odd lift.

An automorphism of order 8 of $B$ acts by $(t_0:t_1:t_2) \mapsto
(\epsilon_8^3t_0:\epsilon_8^{-1}t_1:t_2)$ in coordinates where $B$ is
given by the equation $t_2^4+t_0t_1(t_0^2+t_1^2) = 0$. It has 3 fixed
points in $\bbP^2$, two of which are on $B$. Thus, its lift must have 4
fixed points. The trace of an even lift of $g$ is equal to $-1$ and has
2 fixed points. We conclude that an element of order 8 in $G$ is an odd lift. 
Thus the subgroup isomorphic to $M_{16}$ in \ct{2B.5} is not minimal.
\end{proof}

\section{del Pezzo surfaces of degree $1$}
\begin{thm}[Case \ct{1}]
Let $X$ be a minimal del Pezzo $G$-surface of degree $1$.
Then $G$ has a fixed point.
\end{thm}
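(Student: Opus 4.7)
The plan is to exhibit a canonical $G$-fixed point on any del Pezzo surface $X$ of degree $1$, namely the base point of the anticanonical pencil. This point is defined intrinsically from $K_X$, so any automorphism of $X$ must preserve it; in particular, every $G$-action on $X$ fixes it, and minimality of the $G$-surface plays no role in the argument.

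First I would compute $h^0(X, -K_X)$. Since $-K_X$ is ample on a del Pezzo surface, Kodaira vanishing gives $h^i(-K_X) = h^i(K_X + (-2K_X)) = 0$ for $i > 0$. Riemann--Roch then yields
\[
h^0(-K_X) = \chi(-K_X) = \chi(\mathcal{O}_X) + \tfrac{1}{2}(-K_X)\cdot(-K_X - K_X) = 1 + K_X^2 = 2,
\]
so $|-K_X|$ is a pencil. Next I would compute the base locus: by Bertini (applied to this pencil of generically smooth genus-$1$ curves, as follows from adjunction $2g-2 = C\cdot(C+K_X) = 0$), two general members meet transversely in $(-K_X)^2 = 1$ point. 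Thus the scheme-theoretic base locus of $|-K_X|$ is a single reduced point $p \in X$.

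Finally, since $G$ acts on $X$ by automorphisms, it fixes the canonical class $K_X$ and therefore preserves the linear system $|-K_X|$ together with its base locus. Because that base locus consists of the single point $p$, the group $G$ must fix $p$. This $p$ is precisely the point described in case \ct{1} of Theorem~\ref{thm:main}, so no case analysis on the structure of $G$ or on the surface type is needed. There is essentially no hard step: the entire argument is the observation that the anticanonical pencil of a degree-$1$ del Pezzo surface has exactly one base point, and this point is automorphism-invariant.
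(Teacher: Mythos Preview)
Your argument is correct and is exactly the paper's approach: the paper simply notes that the unique base point of $|-K_X|$ is canonical and hence fixed by every automorphism. You have supplied the standard details (Riemann--Roch and $(-K_X)^2=1$) behind that one-line observation, and you are right that minimality is irrelevant here.
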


This is immediate since the unique base point of $|-K_X|$ is canonical,
and thus must be fixed by any automorphism of $X$.
A list of all the minimal groups in this case can be found in
Section~6.7~of~\cite{DI}.

\section{Conic Bundles}

Throughout this section,
$\pi : X \to B$ is a minimal conic $G$-bundle with $B \cong \bbP^1$.
Let $G_K$ be the kernel of the action of $G$ on $B$ and let
$G_B$ be the image.  We have an exact sequence
\[ 1 \to G_K \to G \to G_B \to 1 \ . \]
Let $\Sigma \subset B$ be the set of points whose preimages under $\pi$
are singular.

\begin{lem} \label{lem:GFfaithful}
The group $G_K$ acts faithfully on each fiber. 
\end{lem}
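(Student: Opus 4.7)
The plan is to prove the contrapositive: if $g \in G_K$ acts trivially on some fiber $F = \pi^{-1}(b_0)$, then $g$ is the identity on $X$. I would handle the singular and smooth fiber cases separately, as the geometry of the fiber controls which argument is available.

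If $F$ is a singular fiber, then $F = \ell_1 \cup \ell_2$ is a union of two smooth rational curves meeting transversally at a node $p_0$. Since $g$ fixes $F$ pointwise, the differential $dg_{p_0}$ fixes both $T_{p_0}\ell_1$ and $T_{p_0}\ell_2$, and these span $T_{p_0}X$, so $dg_{p_0} = \id$. Because the fixed scheme of a finite-order automorphism of a smooth variety (in characteristic zero) is smooth with tangent space at each point equal to the invariants of the ambient tangent space, this forces $X^g$ to have full dimension at $p_0$, and connectedness of $X$ then yields $g = \id$.

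If $F$ is a smooth fiber, I would choose an \'etale neighborhood $U \to B$ of $b_0$ over which $X \to B$ acquires a section through some point of $F$, and hence pulls back to a trivial $\bbP^1$-bundle $U \times \bbP^1 \to U$. Under this trivialization $g$ becomes a morphism $\phi \colon U \to \PGL(2)$ with $\phi(b_0) = \id$, and since $g$ has finite order $n$ this morphism factors through the closed subscheme
\[
V_n = \{\psi \in \PGL(2) : \psi^n = \id\} \subset \PGL(2).
\]
The crux of the argument, and the step I expect to be the main obstacle, is showing that $\{\id\}$ is an isolated component of $V_n$. Every non-identity element of $\PGL(2)$ of order dividing $n$ is diagonalizable with centralizer a maximal torus, so its conjugacy class is closed and of dimension $\dim \PGL(2) - 1 = 2$, while $\{\id\}$ is a single isolated point disjoint from these classes. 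Since $U$ is irreducible and $\phi(U)$ contains $\id$, the image $\phi(U)$ must lie entirely in the component $\{\id\}$. Thus $g$ acts trivially on an open dense subset of $X$, and therefore on all of $X$.
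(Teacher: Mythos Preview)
Your argument is correct. Both cases work: for a singular fiber, the tangent-space computation at the node forces $dg = \id$ and hence $g = \id$; for a smooth fiber, the rigidity of $\{\id\}$ inside the $n$-torsion of $\PGL(2)$ does the job. One small point worth making explicit in Case~2: having a section does not by itself trivialize a $\bbP^1$-bundle, but after shrinking $U$ to an affine \'etale neighborhood with trivial Picard group the bundle does become trivial, and you should also ensure $U$ avoids the finitely many images of singular fibers.

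The paper's proof, however, takes a different and more uniform route. Rather than splitting into cases on the type of $F$, it observes that a non-identity $g\in G_K$ acts non-trivially on the generic fiber and therefore has exactly two fixed points there; their closure is a curve $C\subset X^g$ of relative degree~$2$ over $B$. Since $C\to B$ is proper and surjective, $C$ meets the fiber $F$, which is also entirely contained in $X^g$. Thus two distinct one-dimensional components of $X^g$ meet, contradicting the smoothness of the fixed locus.

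The two arguments are really two sides of the same coin. Your $\PGL(2)$-rigidity statement (a non-trivial finite-order element cannot specialize to the identity) is the group-theoretic shadow of the paper's geometric statement (the two-point fixed divisor on the generic fiber cannot specialize to the entire fiber without producing a singular fixed locus). The paper's version is shorter and handles both fiber types at once; yours is more explicit and, in the smooth-fiber case, isolates a useful general principle about families of automorphisms valued in an algebraic group.
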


\begin{proof}
Suppose $g$ is a non-trivial element of $G_K$ that acts identically on a fiber $F$.
We know that $g$ has two fixed points on each nonsingular fiber.
The closure of this set of points is a one-dimensional component $C$ of
$X^g$ that is of relative degree 2 over the base.
More precisely, this curve is the closure of a divisor of degree 2 on
the general fiber $X_\eta$ that defines two fixed points of $G$ on the
geometric generic fiber.
Since $F$ and $C$ intersect and $X^g$ is smooth, we get a contradiction. 
\end{proof}

Let $G_0$ be the kernel of the action of $G$ on $\Pic(X)$.
We use the trichotomy of conic bundles as in \cite{DI}:
\begin{enumerate}
\item $X \to B$ is a ruled surface,
\item $X \to B$ is \emph{non-exceptional}: $G_0 = 1$ and $X$ is not ruled,
\item $X \to B$ is \emph{exceptional}: $G_0 \ne 1$ and $X$ is not ruled.
\end{enumerate}
We will consider each case in turn.

We begin by considering the case of a minimal ruled surface.
Recall that the case of $\mathbf{F}_0 \cong \bbP^1 \times \bbP^1$ was
shown to be birationally equivalent to $\bbP^2$ with a $G$-fixed point
in Section~\ref{sec:DP8}.
In fact, this is true for all ruled $G$-surfaces.

\begin{thm}
Suppose $X \cong \mathbf{F}_n$ is a ruled $G$-surface with a fixed point
where where $n \ge 2$.
Then $G$ is abelian and $X$ is birationally equivalent to
$\bbP^2$ with a $G$-fixed point.
\end{thm}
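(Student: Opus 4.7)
The plan is to establish two statements: that $G$ is abelian, and that $X$ is $G$-birationally equivalent to $\bbP^2$ with a fixed point. For the latter, I will invoke Proposition~A.2 of \cite{RY}, cited in the introduction, which ensures that for abelian group actions on smooth proper varieties, fixed points are a birational invariant; it then suffices to produce a $G$-equivariant birational map from $X$ to $\bbP^2$.

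For the first assertion, consider the exact sequence $1 \to G_K \to G \to G_B \to 1$. Since $p$ projects to a $G_B$-fixed point on $B \cong \bbP^1$, and every finite subgroup of $\PGL(2)$ fixing a point is cyclic, $G_B$ is cyclic. The group $G_K$ fixes $p$ and acts faithfully on the fiber $F$ through $p$ by an argument analogous to Lemma~\ref{lem:GFfaithful}, so $G_K$ is cyclic as well. If $G_K = 1$, then $G \cong G_B$ is already cyclic. Otherwise, since $n \geq 2$, the negative section $s$ is the unique curve of self-intersection $-n$ on $\mathbf{F}_n$, and the $G_K$-fixed locus consists of $s$ (since $G_K$ acts trivially on $s$, as $s \to B$ is $G$-equivariant) together with a second section $s'$ of self-intersection $+n$. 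Because $-n \neq +n$, both $s$ and $s'$ are individually $G$-invariant. The fiber $F_q$ over $q = \pi(p)$ then carries two distinct $G$-fixed points, $F_q \cap s$ and $F_q \cap s'$, so the image of $G$ in $\Aut(F_q) \cong \PGL(2)$ lies in the stabilizer of two points and is hence abelian. Combined with $[G,G] \subseteq G_K$ (from cyclicity of $G_B$), the commutator $[G,G]$ lies in $G_K$ and acts trivially on $F_q$; faithfulness of $G_K$ on $F_q$ then forces $[G,G] = 1$.

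For the birational equivalence, I reduce $n$ by $G$-equivariant elementary transformations. On any $\mathbf{F}_m$ with $m \geq 2$ admitting a $G$-action with a fixed point, the fiber $F_q$ over a $G_B$-fixed point of the base always contains a $G$-fixed point $p'$ off $s$: the image of $G$ in $\Aut(F_q)$ is a finite subgroup of $\PGL(2)$ fixing $F_q \cap s$, so it is cyclic with either exactly two fixed points (the other being off $s$) or is trivial (in which case all of $F_q$ is $G$-fixed). The elementary transformation centered at $p'$ is $G$-equivariant and produces a birational map to $\mathbf{F}_{m-1}$: the strict transform $\tilde s$ has self-intersection $-m$ and meets the strict transform $\tilde F$ of the fiber in the single point $s \cap F_q$, so its image under the blow-down of $\tilde F$ has self-intersection $-m+1$. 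Iterating, I reach $\mathbf{F}_1$ or $\mathbf{F}_0$: the former is $G$-equivariantly contracted at its unique $(-1)$-curve to $\bbP^2$, and the latter is treated by Section~\ref{sec:DP8}.

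The principal technical hurdle lies in proving $G$ is abelian in the case $p \in s$, where the fiber through $p$ does not immediately supply two fixed points; identifying the second $G$-invariant section $s'$ from the fixed locus of $G_K$ is the essential geometric input.
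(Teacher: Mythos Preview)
Your proof is correct, and the reduction via elementary transformations matches the paper. The difference lies in how you establish that $G$ is abelian.

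The paper observes directly that $G$ always has a fixed point on the exceptional section $S$: since $S$ is invariant and $\pi|_S:S\to B$ is an isomorphism, the two $G_B$-fixed points on $B$ lift to two $G$-fixed points on $S$. At such a point $q=S\cap F$, both $S$ and the fiber $F$ are $G$-invariant curves through $q$, so the tangent representation $G\hookrightarrow\GL(T_qX)$ lands in the diagonal torus and $G$ is abelian. The image of $G$ in $\Aut(F)$ is then cyclic with a second fixed point off $S$, giving the paper its four fixed points and the center for the elementary transformation in one stroke. (The paper's text is terse here and simply asserts ``since $G$ is abelian'' when it needs it; the argument above is the natural completion.)

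Your route instead analyzes $X^{G_K}$ to manufacture a second invariant section $s'$, then uses the two fixed points $F_q\cap S$ and $F_q\cap s'$ together with $[G,G]\subset G_K$ and faithfulness on $F_q$. This works, but note that the ``technical hurdle'' you flag (the case $p\in s$) evaporates in the paper's approach: one does not start from the given fixed point $p$ at all, but rather from the fixed points on $S$ that come for free from the base. Your appeal to Proposition~A.2 of \cite{RY} is fine, though again unnecessary: after each elementary transformation the other invariant fiber is untouched and still carries its $G$-fixed points, so the induction runs without invoking the birational invariance result.
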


\begin{proof}
We recall some facts from the proof of Theorem 4.10 of~\cite{DI}.
Let $S$ be the exceptional section.
It is invariant with respect to the group of automorphisms of
$\bfF_n$.
The action of $G$ on $S$ is isomorphic to the action of $G$ on the base
of the projection $\pi:\bfF_n\to \bbP^1$.
Since $G$ has a fixed point on $X$, its projection is fixed, hence $G$
acts on $\bbP^1$ with two fixed points, and therefore $G$ has two fixed
points on $S$.
Since it has two fixed points on each invariant fiber, we obtain that
$G$ has 4 fixed points, two on $S$ and two outside $S$.

We now show that $X$ is birationally equivalent to $\bbP^2$ with a fixed
point.  Since $G$ fixes a point $p$ not on $S$, we may perform an elementary
transformation at that point to obtain a ruled $G$-surface $X'$
isomorphic to $\mathbf{F}_{n-1}$ which must also have a fixed point
since $G$ is abelian.  By applying this procedure inductively, we
eventually find a birational equivalence to a $G$-surface isomorphic to
$\mathbf{F}_1$.
Blowing down the exceptional divisor we have the desired result.
\end{proof}

\begin{thm}[Case \ct{C.ne}]
Let $G$ be a non-cyclic finite group and let $X$ be a
non-exceptional $G$-minimal conic bundle.
Assume that $G$ has a fixed point.
Then, we have one of the following cases:
\begin{itemize}
\item[(i)] $G_K \cong 2$, $G_B \cong 2n$, $G \cong 2\times 2n$,
\item[(ii)] $G_K \cong 2^2$, $G_B \cong n$, $G \cong 2\times 2n$,
\item[(iii)] $G_K \cong 2^2$, $G_B \cong n$, $G \cong (2m : 2)\times q$.
\end{itemize}
where $n=mq$ is a positive integer, $m$ is a power of $2$ and $q$
is odd.
\end{thm}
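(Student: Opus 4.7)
The plan is to analyze the extension $1 \to G_K \to G \to G_B \to 1$ by first pinning down $G_K$ and $G_B$ separately, then classifying the possible extensions subject to the constraint that $G$ has a fixed point.

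First, since $G$ fixes $p \in X$ and $\pi$ is equivariant, $G_B$ fixes $\pi(p) \in \bbP^1$, and every finite subgroup of $\PGL(2)$ with a fixed point is cyclic (it embeds into the stabilizer $\bbG_a \rtimes \bbG_m$, whose finite subgroups are cyclic). Hence $G_B$ is cyclic. Next, by Lemma~\ref{lem:GFfaithful}, $G_K$ acts faithfully on every fiber; combined with the non-exceptional hypothesis $G_0 = 1$ and the classification of minimal conic bundles in \cite{DI}, the only possibilities are $G_K \in \{1, 2, 2^2\}$. Since $G$ is non-cyclic while $G_B$ is cyclic, we must have $G_K \neq 1$.

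If $G_K \cong 2$, the extension is central (any action on a cyclic group of order $2$ is trivial), so a non-cyclic $G$ must split as $G \cong G_B \times 2$, and non-cyclicity forces $|G_B|$ to be even, say $2n$. This gives case (i).

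If $G_K \cong 2^2$, then $G$ embeds in $\GL(T_p X) \cong \GL(2)$, and a faithful copy of $2^2$ sits (up to conjugacy) as the sign-diagonal subgroup, whose normalizer is $\bbG_m^2 \rtimes 2$. Consequently, the conjugation action $G_B \to \Aut(2^2) \cong \frakS_3$ has image of order at most $2$, so there are two subcases. In the trivial-action subcase, $G \subset \bbG_m^2$ is abelian of the form $a \times b$ with $a \mid b$ and $2 \mid a$; cyclicity of $G/G_K$ forces $a = 2$, giving $G \cong 2 \times 2n$ (case (ii)). In the order-$2$ subcase, the index-$2$ abelian subgroup $A = G \cap \bbG_m^2$ is swap-invariant, hence of the form $a \times a$; splitting off the unique odd-order cyclic subgroup of $G_B$ (which must centralize $G_K$, since no odd-order element acts nontrivially on $2^2$) leaves a non-abelian $2$-group of the form $2m : 2$, yielding $G \cong (2m:2) \times q$ with $n = mq$ (case (iii)).

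The hardest step will be justifying the restriction $G_K \in \{1, 2, 2^2\}$ for non-exceptional conic bundles, which I plan to invoke from \cite{DI} through the analysis of bisections and of the kernel of the action on $\Pic(X)$; the remaining group-theoretic analysis is essentially routine, requiring care only in verifying the symmetric shape $a \times a$ of $A$ and extracting the odd direct factor in case (iii).
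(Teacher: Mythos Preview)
Your overall strategy matches the paper's: invoke \cite{DI} for $G_K \in \{2,2^2\}$, use the fixed point to make $G_B$ cyclic, and constrain the extension via $G \hookrightarrow \GL(T_pX)$. The case $G_K \cong 2$ and the trivial-action subcase of $G_K \cong 2^2$ are fine.

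The non-abelian subcase has a real gap. Your assertion that the swap-invariant index-$2$ subgroup $A = G \cap \bbG_m^2$ is ``of the form $a \times a$'' is false: swap-invariance together with $2^2 \subset A$ does not force this shape. For $m = 4$ one may take $A = \langle (\epsilon_4,\epsilon_4^{-1}),\,(-1,1)\rangle \cong 4 \times 2$, which is swap-invariant, contains the sign-diagonal $2^2$, and extends (by $s = (\epsilon_8,\epsilon_8)\sigma$) to a legitimate $G$ of order $16$ with $G/G_K \cong 4$ cyclic; this $G$ is isomorphic to $8:2$, so it is covered by case~(iii), but not via your argument. The correct general shape is $A \cong 2 \times m$: since $A \subset \bbG_m^2$ has rank $\le 2$ and contains $G_K \cong 2^2$, one has $A[2] = G_K$, and then cyclicity of $A/A[2] \subset G_B$ forces the smaller invariant factor of $A$ to be $2$. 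Even granting this, you still owe the identification of the extension $1 \to A \to G \to 2 \to 1$ as $2m:2$ with the involution acting by $g \mapsto g^{m+1}$. The paper does this differently: it first splits off the odd part, lifts a generator $g$ of the $2$-part of $G_B$, observes that $g^m \in G_K$ is fixed under $g$-conjugation so $g^m \in \{1,z\}$ with $z$ the image of $-I_2$, and eliminates $g^m = 1$ for $m \ge 4$ by exhibiting a rank-$3$ abelian subgroup $\langle x,z,g^{m/2}\rangle$ inside $\GL(2)$. (A minor separate slip: ``no odd-order element acts nontrivially on $2^2$'' is false, since $\Aut(2^2) \cong \frakS_3$; the odd part of $G_B$ centralizes $G_K$ for the reason you already gave, namely that the image in $\Aut(2^2)$ has order $\le 2$.)
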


\begin{proof}
Recall that in this case $G_0$ is trivial and $X$ is not a ruled surface.
Here $G_K \cong 2^a$ with $a = 1$ or $2$ (\cite{DI}, Theorem 5.7).
Since $G$ has a fixed point, the group $G_B$ is cyclic.

First, consider $G_K \cong 2$.
Since we assume that $G$ is not cyclic, $G \cong 2\times 2n$ for some
positive integer $n$.

Assume now that $G_K \cong 2^2$.  The order of $G_B$ is $n=m q$
where $m$ is a power of $2$ and $q$ is a positive odd integer.
Thus there is a homomorphism from $G$ to a cyclic group of order $m$ whose
kernel is a $2$-group.  Since the quotient and kernel have coprime
orders, the extension splits.
It is known that the subgroup $G_K$ is always minimal (see \cite{DI},
Lemma 5.6).  The $2$-group will also be minimal since it contains
$G_K$.
Thus, it suffices to assume the order of $G_B$ is of the form $n=m$.

Since $G$ must embed into $\GL(2)$, we see that some element $z$ in
$G_K$ must map to the matrix $-\id$.  Let $x$ be a non-trivial element
of $G_K$ not equal to $z$.
Let $g$ be a lift to $G$ of a generator of $G_B$.

Since $g$ must normalize $G_K$ and $z$ must be central, we see that either
\begin{enumerate}
\item[(1)] $gxg^{-1}=x$, or
\item[(2)] $gxg^{-1}=xz$.
\end{enumerate}
In case (1), the group $G$ is abelian.  Since $G$ must have rank $\le
2$, we see that $G \simeq 2 \times 2m$.

In case (2), rearranging we obtain $xgx^{-1}=gz$.
Note that $xg^mx^{-1}=g^m$ since $m$ is even.
Thus, the group generated by
$\la x, z, g^m \ra$ must be abelian of rank $2$.
Since $G_K = \la x,z \ra$, and $x,g$ do not commute,
we conclude that either
\begin{enumerate}
\item[(a)] $g^m=1$, or
\item[(b)] $g^m=z$.
\end{enumerate}
In case (a), we conclude $g$ has order $2$ and $G \simeq D_8$;
otherwise, we would have a contradiction as the abelian group
$\la x, z, g^{m/2} \ra$ would have rank $3$.
In case (b), we conclude that our group $G$ is a semidirect product
$2m : 2$ where the involution $x$ acts by $g \mapsto g^{m+1}$.
\end{proof}

\begin{example}
Let $X$ be a del Pezzo surface of degree 4 and $G$ be a subgroup of
automorphisms generated by two involutions of the first kind, say
$\iota_1,\iota_2$.
The group has 4 fixed points $E_1\cap E_2 = \{p_1,p_2,p_3,p_4\}$.
Let $\sigma : X'\to X$ be the blowing up of $p_1$.
From the description of the involutions in Section~\ref{sec:DP4},
we wee that $p_1$ does not lie on any exceptional divisors.
Thus, the surface $X'$ is a del Pezzo surface of degree 3.
In its anti-canonical model, it is a cubic surface.

The image of the exceptional curve $E$ of $\sigma$ is a line on $X'$
invariant with respect to $G$.
The pencil of planes through $R$ has $R$ as its fixed component and the
residual pencil is a pencil of conics invariant under $G$.
It equips $X'$ with a structure of a minimal conic bundle $G$-surface
with a 2-section $R$.

Since $G \cong 2^2$ acts faithfully on the tangent space
of $X$ at $p_1$, and has two invariant tangent directions of $E_1$ and
$E_2$ at $p_1$, we see that the involution of the second kind
$\iota_{12}$ acts identically on $R$ while the other two involutions
act non-trivially.
We have a 2 to 1 morphism $R \cong \bbP^1 \to B \cong \bbP^1$
which is equiariant with respect to a cyclic group of order $2$;
this forces the action on $B$ to be trivial.
Thus the group $G_B$ is trivial and $G = G_K \cong 2^2$.

As $G$ acts faithfully on each fiber by Lemma~\ref{lem:GFfaithful},
we conclude that all of the fixed points must be the singular points of
the singular fibers of $\sigma$.
A conic bundle on a cubic surface has 5 singular fibers,
so we have 5 fixed points.  Alternatively, we note that there are
4 fixed points on $X$, but there are 2 fixed points on $R$;
thus $X'$ has 5 fixed points.

Note in the plane model of $X'$ as the blow-up of 6 points
$x_1,\ldots,x_5,p_1$, the pencil of conics arises from the pencil of
cubics through $x_1,\ldots,x_5$ and a double point at $p_1$. The
singular fibers are the unions of the line $\ell_i = \overline{p_1,x_i}$
and the conic $C_i$ through the points $x_k, k\ne i$ and $p_1$. Three of
such pairs $(l_i,C_i)$ intersect at $p_1$ and $p_i, i = 2,3,4$ and the
remaining two are tangent at $p_1$ with the tangent directions
corresponding to the cubics defined by $E_1$ and $E_2$.
\end{example}

\begin{thm}[Case \ct{C.ex}]
Let $G$ be a non-cyclic finite group and let $X$ be an
exceptional $G$-minimal conic bundle with a fixed point $p$.
Then $G_K$ is a dihedral group or a cyclic group of even order,
$G_B$ is cyclic or trivial,
and $G$ is a subgroup of $D_{2m} \times n$ for some
integers $m$ and $n$.
Furthermore, $p$ is a singular point of a singular fibre of $\pi$.
\end{thm}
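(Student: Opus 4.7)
The plan is to proceed in three stages: determine $G_B$ and the ambient structure, locate the fixed point $p$ on the bundle, and then extract the structure of $G_K$ and assemble $G$.

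First, since $G$ fixes $p$, the induced action of $G_B$ on $B \cong \bbP^1$ fixes $\pi(p)$, and any finite subgroup of the stabilizer of a point in $\Aut(\bbP^1)$ is cyclic; so $G_B$ is cyclic or trivial. In parallel, the tangent representation at $p$ embeds $G$ faithfully into $\GL(2)$, so every abelian subgroup of $G$ has rank at most two and $G$ itself must be realizable as a finite subgroup of $\GL(2)$.

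Next, I would show $p$ is the singular point of a singular fibre. Since the conic bundle is exceptional, $G_0 \ne 1$; and because $G_0$ acts trivially on $\Pic(X)$ it permutes the classes of the components of singular fibres trivially, which for a minimal conic bundle (with at least three singular fibres) forces $G_0 \le G_K$. By Lemma~\ref{lem:GFfaithful}, $G_0$ acts non-trivially on every fibre; on a generic smooth fibre this action has two fixed points, sweeping out a smooth bisection $C \to B$ constituting the one-dimensional part of $X^{G_0}$. Suppose for contradiction that $p$ is a smooth point of $F_p = \pi^{-1}(\pi(p))$. Then the component of $F_p$ through $p$ is a $\bbP^1$ on which $G_K$ acts fixing $p$ (and fixing the node if $F_p$ is reducible), forcing $G_K$ to be cyclic. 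Moreover $p$ must lie on $C$: the alternative of an isolated $G_0$-fixed point on the smooth locus of $F_p$ is incompatible with a tangent-space analysis, since the fibre direction is $G_0$-stable and the fibrewise fixed points deform to a curve. Since $G_K$ acts trivially on $B$ and preserves $C \to B$, the quotient $G_K/G_0$ injects into the deck group $\bbZ/2$. Combined with $G_B$ cyclic and the $\GL(2)$-embedding, a short case analysis of small extensions of cyclic by cyclic forces $G$ to be cyclic, contradicting our hypothesis. Hence $p$ is the node of a singular fibre $F_p = F_1 \cup F_2$.

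Now let $G_K^+ \le G_K$ be the index-at-most-two subgroup preserving each $F_i$. Since $G_K^+$ acts on $F_1 \cong \bbP^1$ fixing the node $p$, it is cyclic; hence $G_K$ is either cyclic (when $G_K^+ = G_K$) or dihedral (when the index is two and the swap acts as $\alpha \mapsto \alpha^{-1}$ on $G_K^+$, the other possibility being ruled out by the rank-two constraint on $G$). The even-order condition in the cyclic case comes from $G_0 \le G_K$: the exceptional structure supplies a central involution (the covering involution from the DI classification of exceptional bundles). For the embedding statement, combine the extension $1 \to G_K \to G \to G_B \to 1$ with $G \hookrightarrow \GL(2)$: the image of $G_K$ in $\PGL(2)$ is cyclic or dihedral, so $G$ lies in the preimage of the normalizer of a maximal torus $T \rtimes \bbZ/2$; splitting off the central cyclic factor corresponding to $G_B$ exhibits $G$ as a subgroup of $D_{2m} \times \bbZ/n$ for appropriate $m$ and $n$.

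The main obstacle will be the exclusion step in the second paragraph: ruling out that $p$ is a smooth point of its fibre requires a careful comparison of the local structure of the $G_0$-fixed locus at $p$ with the global bisection $C$, and leveraging the $\GL(2)$-rank bound to force cyclicity of $G$. Making this fully rigorous may well require invoking the explicit DI classification of exceptional conic bundles to enumerate the possible shapes of $G_0$; the remaining steps reduce to standard facts about finite subgroups of $\GL(2)$ and normalizers of tori.
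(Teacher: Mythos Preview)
Your overall architecture is reasonable, but the heart of your argument --- the exclusion step in the second paragraph --- does not go through as written, and the fix is exactly the structural input you defer to the end.

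Concretely: if $p$ were a smooth point of its fibre, then the tangent line $T_pF_p \subset T_pX$ is $G$-invariant, so the faithful image of $G$ in $\GL(2)$ lies in a Borel and is therefore \emph{abelian}, not necessarily cyclic. Groups such as $2\times 2n$ survive this, so you get no contradiction with ``$G$ non-cyclic''. Your auxiliary step (``$G_K/G_0$ injects into the deck group of $C$, then a short case analysis forces $G$ cyclic'') inherits the same problem; moreover, in the exceptional case the $G_0$-fixed locus is not an irreducible bisection but the union of two disjoint sections $S_0 \cup S_\infty$, so the ``deck group'' picture is already off. Finally, in the reducible-fibre case your claim that $G_K$ acts faithfully on the single component through $p$ is not justified: an element could act trivially on $F_1$ and nontrivially on $F_2$, so you only get $G_K$ as an extension of cyclic by cyclic.

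The paper avoids all of this by invoking the explicit DI model of exceptional conic bundles from the outset: $X$ carries two disjoint sections $S_0,S_\infty$ whose contraction gives $H_{2g+2}(t_0,t_1)+t_2t_3=0$ in $\bbP(1,1,g{+}1,g{+}1)$, and $G$ sits inside $G_B\times N$ with $N=\bbC^\times\!:\!2$ acting on $(t_2,t_3)$. This immediately yields $G\subset D_{2m}\times n$ and that $G_K$ is a finite subgroup of the infinite dihedral group. Minimality forces some element of $G_K$ to swap $S_0$ and $S_\infty$, giving the even-order/dihedral dichotomy. For the location of $p$: since $G_0$ fixes each singular fibre it fixes $\ge 3$ points on each $S_i$, hence acts trivially on $S_0$ and $S_\infty$; combined with Lemma~\ref{lem:GFfaithful} this pins $X^{G_0}$ on a smooth fibre to the two section points, which the swap then excludes from $X^G$. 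The only surviving candidates are the nodes of singular fibres. Your instinct that the DI classification is needed is right --- but it should be the starting point, not the fallback.
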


\begin{proof}
Here $G_0$ is non-trivial, but $X$ is not a ruled surface.
Recall from Section~5~of~\cite{DI} that $X$ has 2
disjoint sections $S_0$ and $S_\infty$ that can be blown down to obtain
a hypersurface 
\[X': H_{2g+2}(t_0,t_1) + t_2t_3 = 0 \]
in the weighted projective space $\bbP(1:1\ ;\ g+1:g+1)$ for $g$ a
positive integer.
The map $\pi: X \to B$ is given by the morphism $(t_0:t_1\ ;\ t_2, t_3)
\to (t_0:t_1)$.

Since $G_B$ is cyclic or trivial, by the proof of
Proposition~5.3~of~\cite{DI} we see that $G$ is a subgroup of
$G_B \times N$ where $G_B$ acts on $(t_0:t_1)$ linearly and $N$ is the
subgroup $\bbC^\times : 2$ of $\SL_2(\bbC)$ which preserves $t_2t_3$.
Note that if $G$ is minimal then there must exist an element in $G_K$
which swaps $t_2$ and $t_3$ and thus has even order.
Since $G_K$ is a subgroup of even order of a dihedral group
it must be of the form in the statement of the theorem.

Finally, we establish that $G$ fixes a singular point of a singular
fiber.  
The subgroup $G_0$ leaves invariant each singular fiber and each section
$S_0$ and $S_\infty$.  Since there are $\ge 3$ singular fibers, the sections
$S_0$ and $S_\infty$ have $\ge 3$ points fixed by $G_0$.  Thus the
action of $G_0$ on $S_0$ and $S_\infty$ is trivial.  Since $G_0$ is a
subgroup of $G_K$, by Lemma~\ref{lem:GFfaithful}, it acts faithfully on
each fiber $F$.
In particular, it can only fix the points $F \cap S_0$ and $F \cap S_\infty$
on a non-singular fiber.
Since an element of $G_K$ swaps the two sections and $X^G \subset
X^{G_0}$ we see that $G$ can only fix the singular points of singular
fibers.
\end{proof}

\bibliographystyle{alpha}

\end{document}